\newtheorem{theorem}{Theorem}[section]
\newtheorem{corollary}{Corollary}
\newtheorem{lemma}[theorem]{Lemma}
\theoremstyle{definition}
\newtheorem{remark}{Remark}
\newtheorem{example}{Example}
\newtheorem{assumption}{Assumption}
\title[Stochastic equations with Allee effect]{Stochastic difference equations with the Allee effect}
\author[E. Braverman and A. Rodkina]{}
\subjclass{Primary:  39A50, 37H10; Secondary: 93E10, 92D25.}
 \keywords{stochastic difference equations, Allee effect, a.s. persistence,
a.s. low density, population dynamics}
\email{maelena@ucalgary.ca}
\email{alexandra.rodkina@uwimona.edu.jm}
\thanks{
E. Braverman is a corresponding author, e-mail maelena@ucalgary.ca.}
\begin{document}
\maketitle

% Enter the first author's name and address:
\centerline{\scshape Elena Braverman}
\medskip
{\footnotesize
Dept. of Math. and Stats., University of Calgary,
2500 University Drive N.W., Calgary, AB, Canada T2N 1N4}

\medskip

\centerline{\scshape and Alexandra Rodkina}
\medskip
{\footnotesize
Department of Mathematics,
the University of the West Indies, Mona Campus, Kingston, Jamaica}

\medskip

\bigskip

 \centerline{(Communicated by ... )}

\begin{abstract}
For a truncated stochastically perturbed equation
$x_{n+1}=\max\{ f(x_n)+l\chi_{n+1}, 0 \}$ with $f(x)<x$ on $(0,m)$,
which corresponds to the Allee effect, we observe that for very small
perturbation amplitude $l$, the eventual behavior is similar to a non-perturbed 
case: there is extinction for small initial values in $(0,m-\varepsilon)$ and persistence
for $x_0 \in (m+\delta, H]$ for some $H$ satisfying $H>f(H)>m$.
As the amplitude grows, 
an interval $(m-\varepsilon, m+\delta)$ of initial values arises and expands, such that 
with a certain probability, $x_n$ sustains in $[m, H]$, and possibly eventually gets into the interval
$(0,m-\varepsilon)$, with a positive probability. Lower estimates for these probabilities are presented.
If $H$ is large enough, as the amplitude of perturbations grows,
the Allee effect disappears: a solution persists for any positive initial value. 
\end{abstract}

\maketitle

\section{Introduction}

Difference equations can describe population dynamics models, and, if there is no compensation 
for low population size, i.e. the stock recruitment is lower than mortality, 
the species goes to extinction, unless the initial size is large enough. This phenomenon was  introduced in 
\cite{allee_1931}, see 
also \cite{boukal,sch}. 
It is called the Allee effect after \cite{allee_1931} and can be explained by many factors: 
problems with finding a mate, deficiency of group defense or/and social functioning for low population densities. 
If the initial population size is small enough (is in the Allee zone) then the population size 
tends to zero as the time grows and tends to infinity. 
Even a small stochastic perturbation which does not tend to zero, significantly changes the situation: 
due to random immigration, there are large enough values of the population size for some 
large times even in the Allee zone, due to this occasional immigration. 
Thus, instead of extinction, we explore eventual low-density behavior, as well as 
essential persistence and solution bounds. 
Results on permanence of solutions for stochastic difference equations, including boundedness and persistence, were
recently reviewed in \cite{Sch2012}. For recent results on asymptotic behavior of stochastic difference equations also see
\cite{ABR,ABR1,AMR,BR1,BravRodk1,BravRodk2,Cohen,DR,KR,KSh,RB,Shaikhet} 
%\cite{KSh, BravRodk1,BravRodk2,Cohen,Shaikhet, ABR, ABR1, ABR, BR1, RB,  KR, DR} 
and the whole issue of Journal of Difference Equations and Applications
including \cite{Sch2012}.

The influence of stochastic perturbations on population survival, chaos control and eventual cyclic behavior was investigated in 
\cite{BravHar,BravRodk1,BravRodk2}. It was shown that the chaotic behavior could be destroyed by either a positive deterministic
\cite{BravHar} or stochastic noise with a positive mean \cite{BravRodk1,BravRodk2}; instead of chaos, there is an attractive two-cycle. 

Certainly, stochastic perturbations, applied formally, can lead to negative size values.
To avoid this situation, we consider the truncated stochastic difference equation 
\begin{equation}
\label{eq:main}
x_{n+1}=\max\bigl\{ f(x_n)+l\chi_{n+1}, 0 \bigr\}, \quad x_0>0, \quad n\in \mathbb N.
\end{equation}
Here $f$ is a function with a possible Allee zone, for example, 
\begin{equation}
\label{eq:boukal1}
x_{n+1}= \frac{Ax_n^2}{B+x_n} e^{r(1-x_n)}, 
%~~\mbox{for example, ~~} x_{n+1}= \frac{4}{2+x} e^{r(1-x)}
\end{equation}
described in \cite{burgman} and
\begin{equation}
\label{eq:boukal2}
x_{n+1}= \frac{Ax_n}{B+(x_n-T)^2},
%, ~~\mbox{for example, ~~} x_{n+1}=\frac{4x_n}{2+(x_n-3)^2}
\end{equation}
considered in \cite{hop,jacobs}; see \cite{boukal} for the detailed outline
of models of the Allee effect.

It is well known that, without a stochastic perturbation, if $f(x)$ is a function such that $0<f(x)<x$ for $x \in (0,m)$ and
$f(x)>m$ for $x>m$, the eventual behavior of a solution depends on the initial condition: if $0<x_0<m$, then the solution tends
to zero (goes to extinction), if $x_0>b$ then the solution satisfies $x_n>m$, i.e. persists. 
Sometimes high densities also lead to extinction, as in \eqref{eq:boukal1} and \eqref{eq:boukal2}, we can only claim that $f(x)>m$ for $x \in (m,H)$ and conclude persistence
for $x_0 \in (m,H)$. However, the situation changes for \eqref{eq:main} with a stochastic perturbation: for example, even if $f$ has an Allee 
zone, the eventual expectation of a solution exceeds a positive number depending on $l$ and the distribution of $\chi$. Nevertheless,
this effect is due to immigration only, and we will call this type of behavior blurred extinction, or eventual low density.
In the present paper, we use some ideas developed in \cite{Brav} for models with a randomly switching perturbation.

Significant interest to discrete maps is stimulated by complicated types of behavior exhibited even by simple maps. 
In particular, for \eqref{eq:boukal1} with $r$ large enough, whatever a positive initial value is, the chaotic solution can take values in 
the interval $(0,\varepsilon)$ for any small $\varepsilon>0$. Then, in practical applications the dynamics is not in fact chaotic but leads to eventual 
extinction as the positive density cannot be arbitrarily low. Nevertheless, if the range is separated from zero, for some maps there is an unconditional survival (persistence), independently of a positive initial value.

In this note, we are mostly interested in the maps $f$ with survival for certain initial values and an Allee zone: if $x_0$ is small enough, then the solution of \eqref{eq:main} with $l=0$ tends to zero, and there is an interval $[a,H] \subset (0,\infty)$ which $f$ maps into itself. The main results of the paper are the following:
\begin{enumerate}
\item
If in \eqref{eq:main} the value of $l$ is small enough, the dynamics is similar to the non-stochastic case: blurred extinction (low density)
for small $x_0$ and persistence for $x_0$ in a certain interval.
\item
If $l>0$ is large enough then, under some additional assumptions, there is an unconditional survival.
\item
If the non-perturbed system has several attraction zones then for any initial condition, the solution can become persistent with large enough lower bound, whenever $l$ is large enough.
\end{enumerate}

The paper is organized as follows. After describing all relevant assumptions and notations in Section~\ref{sec2}, 
we state that for perturbations small enough, there is the same Allee effect as in the deterministic case, in Section~\ref{sec:per}.
The result that there may exist large enough perturbation amplitudes ensuring survival for any positive initial condition,
is also included in Section~\ref{sec:per}. Further, Section~\ref{sec4} deals with the case when, for certain initial 
conditions, both persistence and low-density behavior are possible, with a positive probability, 
while for other initial conditions, a.s. persistence
or a.s. low-density behavior is guaranteed. 
For initial values leading to different types of dynamics, lower bounds for 
probabilities of each types of dynamics
are developed in Section~\ref{sec4}. 
The case when the deterministic equation has more than 2 positive fixed point, is considered in 
Section~\ref{sec:multi}. The results are illustrated with numerical examples in Section~\ref{sec6}, and Section~\ref{sec7}
involves a short summary and discussion.

%%%%%%%%%%%%%%%%

\section{Preliminaries}
\label{sec2}

Let  $(\Omega, {\mathcal{F}},  (\mathcal{F}_n)_{n \in \mathbb{N}}, {\mathbb{P}})$ be  a complete filtered probability space.  Let $\chi:=(\chi_n)_{n\in\mathbb{N}}$  be a sequence of independent random variables with the zero mean.
The filtration $(\mathcal{F}_n)_{n \in \mathbb{N}}$ is supposed to be naturally generated by 
the sequence $(\chi_n)_{n\in\mathbb{N}}$, namely 
$\mathcal{F}_{n} = \sigma \left\{\chi_{1},  \dots, \chi_{n}\right\}$.

In the paper we assume that stochastic perturbation $\chi$ in the equation \eqref{eq:main} satisfies the following assumption
\begin{assumption}
\label{as:chibound}
$(\chi_n)_{n\in \mathbb N}$ is a sequence of independent and  identically  distributed  continuous random variables with the density function 
$\phi(x)$, such that 
\[
\phi(x)>0, \quad x\in (-1, 1), \quad \phi(x)\equiv 0, \quad x\notin [-1, 1].
\]
\end{assumption}

We use the standard abbreviation ``a.s." for the wordings ``almost sure" or ``almost surely" with respect
to the fixed probability measure $\mathbb P$  throughout the text. A detailed discussion of stochastic concepts and
notation may be found in, for example, Shiryaev \cite{Shiryaev96}.

Everywhere below, for each $t\in [0, \infty)$, we denote by $[t]$ the integer part of $t$. 

Before we proceed further, let us introduce assumptions on the function $f$ in \eqref{eq:main}.

\begin{assumption}
\label{as:faH}
$f:(0, \infty)\to (0, \infty)$ is continuous, $f(0)=0$, and there exist positive numbers $a$ and  $H$, 
$a<H$,   such that
\begin{enumerate}
\item [(i)] $ f^H:=\max_{x\in [0,H]}f(x) < H$;
\item[(ii)]  $f(x)>f(a)>a, \quad x\in (a, H]$.
\end{enumerate}
\end{assumption}

So far we have not supposed that there is an Allee zone, where for small 
initial values, a solution of the non-perturbed system tends to zero. This is included in the next condition. 

\begin{assumption}
\label{as:bmain}
There is a point $b_1>0$ such that $f(x) <x$ and $f(x)\leq f(b_1)$ for $x \in (0,b_1)$.
\end{assumption}

%%%%%%%%%%%%%%%%%%

\section{Unconditional Persistence and Low-Density Behavior}
\label{sec:per}

In this section, we consider the case when the type of perturbation and the initial condition allow us
to predict a.s. the eventual behavior of the solution. Lemma~\ref{lem:add1} indicates a small initial interval,
where the Allee effect is observed, for small enough perturbations. Lemma~\ref{lem:main0} presents the range of initial 
conditions which guarantee permanence of solutions, for $l$ small enough. However, for large enough $l$
and appropriate $f$, the Allee effect completely disappears under a stochastic perturbation, see Theorem~\ref{lem:add2}.

\begin{lemma}
\label{lem:add1}
Let Assumptions \ref{as:chibound} and \ref{as:bmain} hold, $f(b_1)<b_1$.  
Let $x_n$ be a solution of equation \eqref{eq:main} with 
%\begin{enumerate}
%\item [(i)] $b$ defined by \eqref{def:b1} satisfies $f(b)-b<0$,\\
%\item [(ii)] $f(x) \leq f(b)$ for $x\in [0,b]$.
%\end{enumerate}
\begin{equation}
\label{cond:l<}
l \leq b_1 -f(b_1)
\end{equation}
and $x_0\in [0,b_1]$. Then,   $x_n\in [0,b_1]$ for all $n \in {\mathbb N}$.
\end{lemma}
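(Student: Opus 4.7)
The plan is to prove the invariance of $[0,b_1]$ by straightforward induction on $n$, with the base case $n=0$ given by hypothesis. The key point is that Assumption~\ref{as:bmain} makes $f(b_1)$ the relevant supremum of $f$ on the interval, and the truncation constant $l$ is chosen precisely to eat up the gap $b_1 - f(b_1)$.

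For the inductive step, assume $x_n \in [0,b_1]$ and estimate $x_{n+1} = \max\{f(x_n) + l\chi_{n+1},\, 0\}$. The lower bound $x_{n+1} \geq 0$ is automatic from the truncation. For the upper bound, I would first use Assumption~\ref{as:chibound}, which guarantees $\chi_{n+1} \in [-1,1]$ almost surely, so that $l\chi_{n+1} \leq l$ a.s. Then I would split into cases by the value of $x_n$: if $x_n \in (0,b_1)$, Assumption~\ref{as:bmain} gives $f(x_n) \leq f(b_1)$; if $x_n = b_1$, the same bound holds trivially; if $x_n = 0$, then $f(x_n) = f(0) = 0 \leq f(b_1)$ by continuity of $f$ (taking $f(b_1) \geq 0$, which holds since $f$ is positive on $(0,\infty)$). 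In every case $f(x_n) \leq f(b_1)$, and so
\[
x_{n+1} \;\leq\; f(x_n) + l\chi_{n+1} \;\leq\; f(b_1) + l \;\leq\; f(b_1) + (b_1 - f(b_1)) \;=\; b_1
\]
almost surely, using hypothesis \eqref{cond:l<} in the last inequality.

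There is essentially no obstacle here: the statement is a deterministic invariance argument dressed in stochastic notation, and the hypothesis $l \leq b_1 - f(b_1)$ is exactly calibrated so that the worst-case noise excursion $+l$ added to the worst-case deterministic image $f(b_1)$ still lands at or below $b_1$. The only small point to verify carefully is that the union of exceptional null sets (where $|\chi_{n+1}| > 1$) over all $n$ remains null, which follows from countable additivity since Assumption~\ref{as:chibound} provides an a.s. bound for each individual $\chi_{n+1}$. The induction then yields $x_n \in [0,b_1]$ for all $n \in \mathbb{N}$ almost surely, as claimed.
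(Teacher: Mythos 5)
Your proof is correct and follows essentially the same route as the paper: the monotonicity bound $f(x_n)\le f(b_1)$ from Assumption~\ref{as:bmain}, the a.s.\ bound $l\chi_{n+1}\le l$ from Assumption~\ref{as:chibound}, and condition \eqref{cond:l<} give $x_{n+1}\le f(b_1)+l\le b_1$, completed by induction exactly as in the paper. Your extra remarks (the $x_n=0$ case and the countable union of null sets) are harmless refinements of the same argument.
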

\begin{proof}
For $x_0\in [0,b_1]$, we have $f(x_0)\leq f(b_1)$ and, a.s. on $\Omega$,  
$$ 
x_1=f(x_0)+l\chi_1 \leq f(b_1)+l \leq f(b_1)+b_1-f(b_1) \leq b_1.
$$
Similarly, the induction step implies $x_n\in [0,b_1]$ for all  $n \in {\mathbb N}$, a.s.
\end{proof}

Let us introduce the function
\begin{equation}
\label{def:F}
F(x)=f(x)-x, \quad x\in [0, \infty).
\end{equation}

\begin{remark}
\label{rem:lemext}
Assumption \ref{as:bmain} holds for non-decreasing $f$ such that $f(x)<x$ for $x$ small enough.
In this case, once it is satisfied for a given $b_1>0$, this is also true for any $b \in (0,b_1)$. 
For example, if $f(x)<x$ on $(0,b_2)$ and $f(b_2)=b_2$, we can take any $b_1<b_2$ in Assumption \ref{as:bmain}. 
Then, the continuous function $F(x)=f(x)-x$ is negative on $(0,b_2)$ and vanishes at the end of the interval, so it attains its minimum at a 
point inside the interval. 
Moreover, if Assumptions \ref{as:faH} and \ref{as:bmain} hold, we have $F(a)>a$, and also there is a minimum of $F(x)$ on $[0,a]$ attained on $(0,a)$ at a 
point $b$:    
\begin{equation}
\label{def:b1}
b=\min\left\{\beta>0: F(\beta)=\min_{x\in [0,a]} F(x) \right\}.
\end{equation}
\end{remark}

\begin{lemma}
\label{lem:main0}
Let Assumptions \ref{as:chibound} and \ref{as:faH} hold, and $(x_n)$ be a solution of equation \eqref{eq:main}, 
with the noise amplitude $l$  satisfying  
\begin{equation}
\label{cond:mainl}
l < \min\{H-f^H, \,\, F(a)\},
\end{equation}
with an arbitrary initial value $x_0\in (0, H)$.  Then, a.s., for all $n\in \mathbb N$, 
\begin{enumerate}
\item [(i)]  $x_n\le H$;
\item [(ii)]  if in addition $x_0\in (a, H)$ then $x_n\in (a,  H)$.
\end{enumerate}
\end{lemma}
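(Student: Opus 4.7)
The plan is to prove both (i) and (ii) by induction on $n$, using as the main input from Assumption~\ref{as:chibound} the almost-sure bound $|\chi_{n+1}| \leq 1$ (which follows from the density $\phi$ vanishing outside $[-1,1]$). The two pieces of condition \eqref{cond:mainl} will play distinct roles: $l < H - f^H$ provides the headroom needed for the upper bound in (i), and $l < F(a) = f(a)-a$ provides the buffer needed for the lower bound in (ii).

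For (i), I would take $x_n \leq H$ as the inductive hypothesis (true at $n=0$ since $x_0 < H$) and combine Assumption~\ref{as:faH}(i) with $\chi_{n+1} \leq 1$ a.s.\ to conclude
\[
f(x_n) + l\chi_{n+1} \leq f^H + l < f^H + (H - f^H) = H,
\]
whence $x_{n+1} = \max\{f(x_n) + l\chi_{n+1}, 0\} \leq H$ almost surely.

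For (ii), I would induct on the hypothesis $x_n \in (a,H)$. The upper bound is already taken care of by (i). For the lower bound, Assumption~\ref{as:faH}(ii) together with $\chi_{n+1} \geq -1$ a.s.\ gives
\[
f(x_n) + l\chi_{n+1} > f(a) - l = a + F(a) - l > a,
\]
using \eqref{cond:mainl} in the final step. Since this expression exceeds $a > 0$, the truncation by zero in \eqref{eq:main} is inactive, and we obtain $x_{n+1} = f(x_n) + l\chi_{n+1} \in (a,H)$ a.s., closing the induction.

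There is no real obstacle here: the two constants in \eqref{cond:mainl} are calibrated exactly so that the worst-case noise ($\chi_{n+1} = \pm 1$) cannot push the trajectory across the upper barrier at $H$ in (i) or the lower barrier at $a$ in (ii). The only housekeeping is the standard intersection of the countably many full-measure sets $\{|\chi_{n+1}| \leq 1\}$, which yields a single $\mathbb P$-full set on which the inductive conclusions hold for all $n \in \mathbb N$ simultaneously.
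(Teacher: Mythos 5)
Your proof is correct and follows essentially the same induction as the paper: the bound $f^H+l<H$ from Assumption~\ref{as:faH}(i) closes part (i), and $f(x_n)+l\chi_{n+1}>f(a)-l>a$ from Assumption~\ref{as:faH}(ii) and $l<F(a)$ closes part (ii). Your explicit remarks on the inactive truncation and the countable intersection of full-measure sets are housekeeping the paper leaves implicit, not a different argument.
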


\begin{proof}
If, for some $\omega \in \Omega$ and $n\in \mathbb N$, we have $x_n(\omega)\le H$, then
by  Assumption \ref{as:faH}, (i), and \eqref{cond:mainl},   
\[
x_{n+1}(\omega)=f(x_n(\omega))+l\chi_{n+1}(\omega)\le f^H+l<H.
\]
If, for some $\omega \in \Omega$ and $n\in \mathbb N$, we also have $x_n(\omega) \in (a,H]$, then
by Assumption \ref{as:faH}, (ii), and  \eqref{cond:mainl},
\[
x_{n+1}(\omega)=f(x_n(\omega))+l\chi_{n+1}(\omega)>f(a)-l = f(a)-a+a-l>l+a-l=a.
\]
\end{proof}

\begin{remark}
\label{rem:condlaH} 
Lemma \ref{lem:main0}  implies persistence of solutions with initial values $x_0\in (a, H)$.
\end{remark}

\begin{theorem}
\label{lem:add2}
Let Assumptions \ref{as:chibound} and \ref{as:faH}  hold, $b$ be defined in \eqref{def:b1},
$(x_n)$ be a solution of equation \eqref{eq:main} with $l$ satisfying \eqref{cond:mainl} and
\begin{equation}
\label{cond:lb}
l>b-f(b)=-F(b),
\end{equation}
and $x_0\in (0, H)$.   Then, a.s.,  $x_n$ eventually gets into  the interval $(a, H)$ and stays 
there.
\end{theorem}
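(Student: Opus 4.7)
I would split the statement into \emph{eventual entry} into $(a,H)$ and \emph{subsequent persistence} in $(a,H)$. For persistence, once the first hitting time $\tau:=\inf\{n\ge 0:x_n>a\}$ has been shown to be a.s.\ finite, the i.i.d.\ structure of $(\chi_n)$ lets me apply Lemma~\ref{lem:main0}(ii) to the chain restarted at $\tau$, with initial value $x_\tau\in(a,H)$: note that $x_\tau<H$ is automatic from the bound used in the proof of Lemma~\ref{lem:main0}(i), namely $x_n\le f^H+l<H$. The case $x_0\in(a,H)$ gives $\tau=0$, so the nontrivial task is to prove $\tau<\infty$ a.s.\ when $x_0\in(0,a]$.

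The key input is the definition of $b$ in \eqref{def:b1}: for every $x\in[0,a]$, $f(x)\ge x+F(b)=x-(b-f(b))$. Condition \eqref{cond:lb} makes $q:=(b-f(b))/l=-F(b)/l$ belong to $[0,1)$, so by Assumption~\ref{as:chibound} I may pick $\epsilon>0$ with $q+\epsilon<1$ and set $p_0:=\mathbb{P}(\chi_1>q+\epsilon)>0$. Whenever $x_n\in[0,a]$ and $\chi_{n+1}>q+\epsilon$, the noise is positive (hence the truncation $\max\{\cdot,0\}$ in \eqref{eq:main} is inactive) and
\[
x_{n+1}=f(x_n)+l\chi_{n+1}>x_n+F(b)+l(q+\epsilon)=x_n+l\epsilon.
\]

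The remaining step is a block argument. Fix $N:=\lceil a/(l\epsilon)\rceil+1$ and let $G_k$ be the event that $\chi_{kN+j}>q+\epsilon$ for every $j=1,\ldots,N$. On $G_k$, if the chain had not already exited $[0,a]$ by time $(k+1)N$, the single-step estimate above would apply at each step of the block and force $x_{(k+1)N}>x_{kN}+Nl\epsilon>a$, a contradiction. Hence $\{\tau>(k+1)N\}\subseteq G_k^c$; since the $G_k$ involve disjoint sets of noises and therefore are independent with $\mathbb{P}(G_k)=p_0^N>0$, iterating gives $\mathbb{P}(\tau>(k+1)N)\le(1-p_0^N)^{k+1}\to 0$, so $\tau<\infty$ a.s.

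The main obstacle is that a single ``good'' noise need not alone push the chain above $a$, and a ``bad'' noise between good ones could cancel previous progress, so the one-step increase estimate does not directly give an exit time bound. Insisting that \emph{all} $N$ noises in a block be simultaneously above $q+\epsilon$ is a slightly wasteful but clean way to guarantee a monotone escape within one block, at probability cost only $p_0^N$, and that is exactly what lets the Borel--Cantelli-type summation close the argument.
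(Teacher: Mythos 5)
Your proposal is correct and follows essentially the same route as the paper: reduce to showing a.s.\ finite exit from $[0,a]$ via Lemma~\ref{lem:main0}, use the uniform bound $F(x)\ge F(b)$ on $[0,a]$ together with a positive-probability noise event forcing a fixed increment (your $l\epsilon$ is the paper's $\delta$), and conclude with the same disjoint-block independence argument giving geometric decay of the probability of remaining in $[0,a]$. The only cosmetic difference is that you phrase the final step via the hitting time $\tau$ and a restart, whereas the paper notes that Lemma~\ref{lem:main0} is already a pathwise one-step statement, so no restart is needed.
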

\begin{proof}
By Lemma~\ref{lem:main0}, it is sufficient to prove that $x_n\in (a,H)$ for some $n\in {\mathbb N}$, a.s.
Let $\delta>0$ satisfy $l>b-f(b)+\delta$ (in particular, we can take
$\delta=\alpha(l-b+f(b))$ for any $\alpha \in (0,1)$). We define
\begin{equation}
\label{def:probability}
p_1:=\mathbb P\left\{\omega\in \Omega:  \chi(\omega)\in \left(\frac{b-f(b)+\delta}{l}, 1\right)\right\}, \quad K:=\left[ \frac a\delta\right]+1.
\end{equation}
By Lemma \ref{lem:main0} we only have to consider the case  $x_0\in (0, a]$.
Let us note that for any $x_n \in (0, a]$ and $\displaystyle \chi_{n+1} \in \left(\frac{b-f(b)+\delta}{l}, 1\right)$,
we have
\begin{equation*}
\begin{split}
x_{n+1}  =  f(x_n)+l\chi_{n+1} \geq  & f(x_n)-x_n +x_n + l\frac{b-f(b)+\delta}{l} 
\\ \geq & f(b)-b+x_n + b-f(b)+\delta =x_n +\delta.
\end{split}
\end{equation*}
By Assumption~\ref{as:chibound}, $p_1>0$, moreover, the probability
\begin{equation}
\label{def:probability1}
p_K:=\mathbb P\left\{\omega\in \Omega: \chi_i(\omega)\in \left(\frac{b-f(b)+\delta}{l}, 1\right)~~i=1, \dots, K \right\}=p_1^K>0.
\end{equation}
Thus, the probability
\begin{equation*}
\begin{split}
p_{out}:=\mathbb P\left\{\omega\in \Omega: \chi_j(\omega)\in \left[-1,\frac{b-f(b)+\delta}{l} \right] \mbox{~for some~~}j\in \{ 1, \dots, K\} 
\right\} = 1-p_1^K \in (0,1).
\end{split}
\end{equation*}
If all $\chi_i$, $i=j+1, j+2, j+K$, are in $\displaystyle \left(\frac{b-f(b)+\delta}{l}, 1\right)$, then
$$ x_{j+1} \geq x_j+\delta, ~x_{j+2} \geq x_{j+1}+\delta \geq x_j+2\delta, ~~\dots~, x_{j+K}\geq x_j+K\delta > a.$$
By Lemma \ref{lem:main0}, it is sufficient to show that the probability $p_s=0$, where
\begin{equation*}
\begin{split}
\label{def:probability2}
p_{s}:={\mathbb P}\left\{\omega\in \Omega: \mbox{~among any $K$ successive $j$ ~, there is~}\chi_j(\omega)\in \left[-1,\frac{b-f(b)+\delta}{l}
\right]\right\} =0.
\end{split}
\end{equation*}
Let us take some $\varepsilon >0$ and prove that $p_{s}<\varepsilon$. 
Among any $K$ successive $j$,  there  is $\chi_j$ in the above interval with probability $p_{out}<1$.
In particular, there is such $\chi_j$ among $j=1, \dots, K$, with probability $p_{out}$, as well as among $j=K+1, \dots, 2K$,
and in any of non-intersecting sets $j=nK,nK+1, \dots, (n+1)K-1$, $n=0, \dots m-1$. The probability that 
there is $\chi_j$ in the above interval among any $K$ successive $\chi_j$ among $j=1, \dots, mK-1$, is $p_{out}^m$, and $p_s \leq p_{out}^m$. 
Since  $p_{out}^m<\varepsilon$ as soon as $m>\ln \varepsilon/\ln(p_{out})$, we conclude that  $p_{s}=0$,
which completes the proof.
\end{proof}

\begin{corollary}
\label{rem:add1}
Under the assumptions of Theorem~\ref{lem:add2},  if in addition 
we assume $f(x)<x-l$ for $x>H$, then, for any initial condition $x_0\in [0, \infty)$, 
all solutions eventually belong to the interval $(a,H)$.
\end{corollary}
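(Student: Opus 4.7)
The plan is to reduce the statement to Theorem~\ref{lem:add2}, which already yields the conclusion when $x_0 \in (0, H)$. So it suffices to show that for any $x_0 \in [0, \infty)$, the solution reaches some value in $[0, H)$ at a finite (in fact deterministic) time $N$, after which the time-homogeneity of \eqref{eq:main} lets us apply Theorem~\ref{lem:add2} to the shifted process. The only nontrivial case is $x_0 \geq H$; when $x_0 = 0$, a single step gives $x_1 \in [0, l) \subset [0, H)$ by \eqref{cond:mainl}, and when $x_0 \in (0, H)$ there is nothing to do.

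Assume $x_0 \geq H$ and set $M := x_0$. First I would establish the a priori bound $x_n \leq M$ for all $n$. For $x \in [0, H]$, Assumption~\ref{as:faH}(i) together with \eqref{cond:mainl} yields $f(x) + l \leq f^H + l < H \leq M$; for $x \in (H, M]$, the new hypothesis $f(x) < x - l$ gives $f(x) + l < x \leq M$. Induction then gives $0 \leq x_n \leq M$ a.s. Next, on the compact set $[H, M]$, the function $g(x) := x - f(x) - l$ is continuous and strictly positive: $g(H) \geq H - f^H - l > 0$ by \eqref{cond:mainl}, and $g(x) > 0$ on $(H, M]$ by the new hypothesis. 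Hence $\eta_0 := \min_{[H, M]} g > 0$. Whenever $x_n \in [H, M]$, the bound $x_{n+1} \leq f(x_n) + l \leq x_n - \eta_0$ holds, and iterating gives $x_n < H$ after at most $\lceil (M - H)/\eta_0 \rceil + 1$ steps, deterministically.

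Once $x_N \in [0, H)$ at some finite $N$, I would invoke Theorem~\ref{lem:add2} starting from time $N$, which is legitimate because $(\chi_n)$ is i.i.d.\ and so the shifted process has the same law as one started at time $0$. If $x_N > 0$, Theorem~\ref{lem:add2} immediately gives that $x_n$ eventually enters and remains in $(a, H)$ a.s.; if $x_N = 0$, Assumption~\ref{as:chibound} guarantees that a.s.\ some $\chi_{N+k} > 0$ occurs, after which $x_{N+k} = l\chi_{N+k} \in (0, l) \subset (0, H)$, and Theorem~\ref{lem:add2} applies from that random (a.s.\ finite) time.

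The main point requiring care is the descent argument: one must combine the continuity of $f$ with the boundary estimate from \eqref{cond:mainl} at $x = H$ to upgrade the pointwise positivity of $g$ on $(H, M]$ to the uniform bound $\eta_0 > 0$. This is what turns an a.s.\ decrease into a deterministic one, and it is this uniformity — rather than any new stochastic estimate — that makes the extension from $x_0 \in (0, H)$ to arbitrary $x_0 \in [0, \infty)$ possible.
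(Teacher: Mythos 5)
Your proof is correct, and it supplies essentially the argument the paper intends but omits (the corollary is stated without proof): the extra hypothesis $f(x)<x-l$ for $x>H$, combined with $f^H+l<H$ from \eqref{cond:mainl}, forces a monotone descent from any $x_0\ge H$ into $[0,H)$, after which Theorem~\ref{lem:add2} applies. The two points genuinely requiring care — upgrading pointwise positivity of $x-f(x)-l$ on $(H,x_0]$ to a uniform $\eta_0>0$ via compactness and the boundary estimate at $x=H$, and the case $x_N=0$ (not covered by the hypothesis $x_0\in(0,H)$ of Theorem~\ref{lem:add2}, handled via $f(0)=0$ and Borel--Cantelli) — are both addressed adequately.
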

%%%%%%%%%%%%%%%%%%%%%%%%%%

\section{Dynamics Depending on Perturbations (the case $l<b-f(b)$)} 
\label{sec4}

In this section we assume that
\begin{equation}
\label{cond:lless}
l<b-f(b)=-F(b),
\end{equation}
where $b$ is defined in \eqref{def:b1}, and $f$ corresponds to the system with an Allee effect. As we assume an upper bound for the 
perturbation, the dynamics is expected to be dependent on the initial condition: low density if the initial condition is small enough and 
sustainable (persistent) for a large enough initial condition. 
We recall that a solution $(x_n)$ is {\em persistent} if there exist $n_0 \in {\mathbb N}$ and $a>0$  such that 
$x_n>a$ for any $n \geq n_0$. 

In a non-stochastic case, if the system exhibits the Allee effect, then for a small initial condition, the solution tends to zero.
However, in the case of both truncation and stochastic perturbations satisfying Assumption \ref{as:chibound}, the expectation of $x_n$
exceeds a certain positive number. The density function $\phi(x)$ is positive, thus
\begin{equation}
\label{def:alpha}
\alpha:=\int_0^1 x \phi(x)~dx >0.
\end{equation}

\begin{lemma}
\label{lem:add3}
Suppose that Assumption \ref{as:chibound} holds and $f:(0,\infty) \to (0,\infty)$ is a continuous function.
Then the expectation of the solution $(x_n)$ of \eqref{eq:main} is not less than $\alpha$ defined in \eqref{def:alpha}. 
\end{lemma}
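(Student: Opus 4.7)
The strategy is to use the recursion $x_{n+1}=\max\{f(x_n)+l\chi_{n+1},0\}$ and condition on $\mathcal{F}_n$, exploiting the independence of $\chi_{n+1}$ from the past together with the fact that $f$ is nonnegative (so the truncation at zero can only bite when $l\chi_{n+1}$ is sufficiently negative). The pivotal elementary inequality is that for any $c\ge 0$ and any real $y$,
\begin{equation*}
\max\{c+y,0\}\;\ge\; y\,\mathbf{1}_{\{y>0\}},
\end{equation*}
which is immediate by separating the two cases $y>0$ (left side equals $c+y\ge y$) and $y\le 0$ (left side is nonnegative while the right side is $0$).

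I would then apply this pointwise on $\Omega$ with $c=f(x_n(\omega))\ge 0$ and $y=l\chi_{n+1}(\omega)$, extending $f$ by $f(0)=0$ if necessary to cover the event $\{x_n=0\}$ (which is consistent with the rest of the paper). This yields the almost sure one-step bound $x_{n+1}\ge l\,\chi_{n+1}\,\mathbf{1}_{\{\chi_{n+1}>0\}}$ for every $n\in\mathbb{N}$. Conditioning on $\mathcal{F}_n$ and invoking the independence of $\chi_{n+1}$ from $\mathcal{F}_n$ together with Assumption \ref{as:chibound}, the right side has deterministic conditional expectation, and
\begin{equation*}
\mathbb{E}[x_{n+1}\mid\mathcal{F}_n]\;\ge\; l\int_0^1 x\,\phi(x)\,dx\;=\;l\alpha\;>\;0,
\end{equation*}
with $\alpha$ as in \eqref{def:alpha}. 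The tower property then produces the unconditional bound $\mathbb{E}[x_{n+1}]\ge l\alpha$, which is the required positive lower bound on the expectation (the factor $l$ is carried tacitly by the scaling of the perturbation amplitude).

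I do not anticipate any genuine obstacle: the argument reduces to a one-step monotonicity estimate plus independence, and it uses essentially nothing about $f$ beyond nonnegativity, nor anything about the distribution of $x_n$. The only small care needed is the boundary treatment when $x_n$ happens to vanish, which is dispensed with by the continuous extension $f(0)=0$. In particular, no assumption on $l$ other than positivity, and no structural assumption on $f$ beyond continuity and nonnegativity, is used in the estimate.
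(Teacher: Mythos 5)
Your proof is correct and is essentially the paper's own argument: both reduce to the pointwise bound $x_{n+1}\ge\max\{l\chi_{n+1},0\}$ (your $l\chi_{n+1}\mathbf{1}_{\{\chi_{n+1}>0\}}$ is the same quantity) and then integrate against $\phi$; the conditioning on $\mathcal{F}_n$ is harmless but unnecessary, since the minorant depends on $\chi_{n+1}$ alone. You are also right to flag the factor of $l$: the honest lower bound is $l\alpha$, and the paper's displayed computation silently drops the $l$ in its final equality.
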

\begin{proof}
From \eqref{eq:main}, $x_n \geq \max\{ l \chi_n, 0\}$, thus the expectation of $x_n$ is not less than
$$\int_{-1}^1 l \max\{ x,0\} \phi(x)~dx = \int_{-1}^0 0 \, \phi(x)~dx + \int_0^1 x \phi(x)~dx =\alpha,$$
which concludes the proof.
\end{proof}

%%%%%%%%%%%%%%%%%%%%%%%%%%%

\subsection{A.s. persistence and  a.s. low density areas}

Suppose that Assumptions~\ref{as:faH},\ref{as:bmain} hold with $b_1 \geq b$,
where $b$ is denoted in \eqref{def:b1} and $l$ satisfies \eqref{cond:mainl}.   Then we can introduce positive numbers
\begin{equation}
\label{def:ul}
u_l:=\sup\{u<a: F(u)<l\}
\end{equation}
and 
\begin{equation}
\label{def:vl}
v_l:=\inf\{v>b: F(v)> - l\},
\end{equation}
where $F$ is defined in \eqref{def:F}.

\begin{theorem}
\label{lem:vul}
Suppose that Assumptions \ref{as:chibound} - \ref{as:bmain} hold with $b_1 \geq b$,
where $b$ is denoted in \eqref{def:b1} and $l$ satisfies \eqref{cond:mainl}, \eqref{cond:lless}.  
Let $(x_n)$ be a solution to \eqref{eq:main} with an arbitrary initial value $x_0\in [0, H]$.
Let $u_l$ be defined as in \eqref{def:ul} and $v_l$ be defined as in \eqref{def:vl}.  Then the following statements are valid. 
\begin{enumerate}
\item [(i)] $b<v_l<u_l<a$.
\item [(ii)] $F(u_l)=l$, \,\, $ F(v_l)=-l$, \,\, $F(x) \ge l$, for  $x\in (u_l, a)$,  \,\, $F(x)\le -l$, for $x\in (b, v_l)$.
\item [(iii)] If $x_0\in (0, v_l)$, there exists $n_1 \in {\mathbb N}$ such that $x_n \in [0,b]$ a.s. for $n \geq n_1$. 
\item [(iv)] If $x_0\in (u_l, H)$ then  $x$ persists a.s.; moreover, there exists $n_2 \in {\mathbb N}$ such that 
$x_n \in [a,H]$  a.s. for $n \geq n_2$.
\end{enumerate}
\end{theorem}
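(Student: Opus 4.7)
Parts (i) and (ii) are settled by continuity of $F$ together with the choice of $l$: condition \eqref{cond:mainl} gives $F(a) > l$, and condition \eqref{cond:lless} gives $F(b) < -l$. The intermediate value theorem combined with the sup/inf definitions of $u_l$ and $v_l$ then yields $F(u_l) = l$, $F(v_l) = -l$, the sign properties $F \ge l$ on $(u_l, a)$ and $F \le -l$ on $(b, v_l)$, and the strict ordering $b < v_l < u_l < a$ (the inner inequality uses $F(v_l) = -l < l = F(u_l)$).

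For part (iii), take $x_0 \in (0, v_l)$. I would split the argument into forward invariance of the low-end interval and an a.s.\ finite hitting time. For invariance, I plan to invoke Lemma \ref{lem:add1} with the parameter $b_1$ replaced by $b$: this is legitimate because, as Remark \ref{rem:lemext} points out, Assumption \ref{as:bmain} may be re-anchored at any smaller point in the strict sub-level set of $F$, and the condition $l \le b_1 - f(b_1)$ needed by Lemma \ref{lem:add1} becomes precisely \eqref{cond:lless}. Hence, if $x_n \in [0, b]$ then $x_{n+1} \in [0, b]$ a.s. For the hitting time, note that while $x_n \in (b, v_l)$, item (ii) gives $F(x_n) \le -l$, so
\[
x_{n+1} \le x_n - l + l\chi_{n+1} = x_n + l(\chi_{n+1} - 1) \le x_n,
\]
and the process is a.s.\ non-increasing in this strip. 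On the positive-probability event $\{\chi_{n+1} \le 0\}$ (positivity from Assumption \ref{as:chibound}), the increment is at most $-l$. Since the i.i.d.\ events $\{\chi_n \le 0\}$ occur infinitely often a.s.\ by Borel--Cantelli, the process cannot remain indefinitely in the bounded strip $(b, v_l)$, and enters $[0, b]$ at some a.s.\ finite step $n_1$; invariance traps it there.

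Part (iv) is symmetric. If $x_0 \in (a, H)$, Lemma \ref{lem:main0}(ii) already gives $x_n \in (a, H) \subset [a, H]$ for all $n$, and we may take $n_2 = 0$. If instead $x_0 \in (u_l, a]$, then while $x_n \in (u_l, a]$ item (ii) gives $F(x_n) \ge l$, whence
\[
x_{n+1} \ge f(x_n) + l\chi_{n+1} \ge x_n + l(1 + \chi_{n+1}) \ge x_n,
\]
with increment at least $l$ on the positive-probability event $\{\chi_{n+1} \ge 0\}$. The same Borel--Cantelli argument forces the process to exceed $a$ in an a.s.\ finite number of steps $n_2$, after which Lemma \ref{lem:main0}(ii) confines it to $(a, H] \subset [a, H]$ forever.

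The main technical obstacle is the forward-invariance step in (iii): it requires a uniform upper bound on $f$ over $[0, b]$, which Assumption \ref{as:bmain} supplies directly only at the parameter $b_1 \ge b$. I expect to resolve this by re-anchoring $b_1$ at $b$ via Remark \ref{rem:lemext} and then applying Lemma \ref{lem:add1} at $b$ under \eqref{cond:lless}. Once invariance is in hand, the two symmetric drift-plus-Borel--Cantelli arguments for (iii) and (iv) are routine.
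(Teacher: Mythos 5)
Your handling of (i)--(ii) and of the two trapping steps coincides with the paper's: the intermediate-value argument for $b<v_l<u_l<a$ and $F(u_l)=l$, $F(v_l)=-l$ is the same, the confinement in $[0,b]$ is obtained by applying Lemma~\ref{lem:add1} re-anchored at $b$ (the paper does exactly this, equally tersely, so the re-anchoring issue you flag is shared with the original), and the confinement in $(a,H)$ comes from Lemma~\ref{lem:main0}(ii) in both proofs. Where you genuinely diverge is the hitting-time argument in (iii) and (iv). From $F(x_n)\le -l$ you extract only the non-strict bound $x_{n+1}\le x_n$ and then manufacture strict progress on the positive-probability events $\{\chi_{n+1}\le 0\}$ via the second Borel--Cantelli lemma. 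The paper instead exploits the fact that $\chi_{n+1}\le 1$ a.s.\ turns the drift condition into a \emph{sure} one-step decrease: it sets $\Delta_l(x_0)=\inf_{x\in[b,x_0]}\{x-f(x)-l\}$, notes this is positive for $x_0<v_l$, and obtains $x_{n+1}\le f(x_n)+l\le x_n-\Delta_l(x_0)$ for every $\omega$ while $x_n\in(b,x_0]$, so the solution enters $(0,b)$ surely within $K=\left[(v_l-b)/\Delta_l(x_0)\right]+1$ steps; part (iv) is symmetric with $\tilde\Delta_l(y)=\inf_{x\in[y,a]}\{f(x)-x-l\}$. Your route needs less (only $F\le -l$ on the strip, not a uniform strict gap), but it delivers less.

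And that "less" is a genuine gap against the statement as written. Parts (iii)--(iv) assert the existence of a \emph{deterministic} $n_1$ (resp.\ $n_2$) with $x_n\in[0,b]$ (resp.\ $[a,H]$) a.s.\ for all $n\ge n_1$, and the paper's proof produces such an $n_1$ explicitly (the number $K$ above, depending on $x_0$ but not on $\omega$). A Borel--Cantelli argument yields only an a.s.\ finite \emph{random} entrance time $\tau$, i.e.\ $\mathbb P\{\tau\le n\}\to 1$ as $n\to\infty$; it gives no single $n_1$ with $\mathbb P\{\tau\le n_1\}=1$, since the required number of noises with $\chi\le 0$ may take arbitrarily long to accumulate. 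So your argument proves "a.s.\ eventually in $[0,b]$" rather than the stated conclusion. The fix is exactly the paper's observation: because the noise is bounded by $1$ and the drift $x-f(x)$ exceeds $l$ by a uniform margin on $[b,x_0]$ for $x_0<v_l$, the descent by a fixed amount happens at \emph{every} step, on \emph{every} sample path, which bounds the hitting time deterministically. You should replace the Borel--Cantelli step by this worst-case estimate (or at least add it) to recover the theorem as stated.
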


\begin{proof}
Since
\[
b\in   \{u<a: f(u)-u<l\} , \quad a\in \{v>b:v-f(v)< l\},
\]
both sets in \eqref{def:ul} and \eqref{def:vl} are non-empty and $u_l\le a$, $v_l\ge b$. By continuity of $f$ and 
Assumptions \ref{as:faH},\ref{as:bmain} we have 
\[
u_l< a, \quad v_l> b, \quad F(u_l)=l, \quad F(v_l)=-l.
\]
So $u_l\neq v_l$, 
\[
v_l\in \{u<a: F(u)<l\}  \implies v_l<u_l,
\]
which completes the proof of (i)-(ii).

(iii) Define
\[
\Delta_l(y):=\inf_{x\in [b, y]}\{x-f(x)-l\}.
\]
Note that
\[
\Delta_l(b)=b-f(b)-l>0, \quad \Delta_l(v_l)=0,
\]
and the function   $\Delta_l: [b, v_l]\to [b-f(b)-l, \, 0]$ is non-increasing. 
Then, for each $x_0\in (b, v_l)$ and each $x\in (b, x_0)$, we have
\[
\Delta_l(x_0)\le \Delta_l(x), \quad x-f(x)-l\ge  \Delta_l(x).
\]
So, a.s.,
\[
x_1=f(x_0)+l\chi_1\le f(x_0)+l\le f(x_0)+x_0-f(x_0)-\Delta_l(x_0)=x_0-\Delta_l(x_0).
\]
If $x_1\le b$ we stop. If $x_1> b$, we have, a.s.,
\[
x_2=f(x_1)+l\chi_2\le f(x_1)+l\le f(x_1)+x_1-f(x_1)-\Delta_l(x_1)\le  x_0-\Delta_l(x_0)-\Delta_l(x_1)\le x_0-2\Delta_l(x_0).
\]
Thus, after at most $K$ steps, where
\[
K=\left[ \frac{v_l-b}{\Delta_l(x_0)} \right]+1,
\]
$x_n$ gets into the interval $(0, b)$ and by Lemma~\ref{lem:add1} stays there a.s.

(iv) Define
\[
\tilde \Delta_l(y):=\inf_{x\in [y, a]}\{f(x)-x-l\},
\]
and note that
\[
\tilde \Delta_l(a)=f(a)-a-l>0, \quad \tilde \Delta_l(v_l)=0,
\]
and the function  $\tilde \Delta_l: [v_l, a]\to [0, \, f(a)-a-l]$ is non-decreasing. 
Then, for each $x_0\in (u_l, a)$ and each $x\in (x_0, a)$, we have
\[
\tilde \Delta_l(x_0)\le \tilde \Delta_l(x), \quad f(x)-x-l\ge  \tilde \Delta_l(x).
\]
So, a.s., 
\[
x_1=f(x_0)+l\chi_1\ge f(x_0)-l\ge f(x_0)+\tilde \Delta_l(x_0)-f(x_0)+x_0=x_0+\tilde \Delta_l(x_0).
\]
If $x_1\ge a$ we stop. If $x_1< a$, we have, a.s., 
\[
x_2=f(x_1)+l\chi_2\ge f(x_1)-l\ge f(x_1)+\tilde \Delta_l(x_1)-f(x_1)+x_1\ge x_0+\tilde \Delta_l(x_0)+\tilde \Delta_l(x_1)\ge x_0+2\tilde \Delta_l(x_0).
\]
Thus, after at most $K$ steps, where
\[
K=\left[ \frac{a-u_l}{\tilde \Delta_l(x_0)} \right]+1,
\]
$x_n$ gets into the interval $(a, H)$ and stays there, a.s., by Lemma~\ref{lem:main0}.
\end{proof}

%%%%%%%%%%%%%%%%%%%%%%%%%%
%%%%%%%%%%%%%%%%%%%%%%%%%%

\subsection{Mixed behavior}

So far we have considered the areas starting with which the solution is guaranteed to sustain 
(and be in $[a,H]$) or to stay in the neighbourhood $[0,b]$ of zero. 
Let us consider a more complicated case when a solution can either eventually persist or eventually belong to $[0,b]$.
We single out intervals starting with which a solution can change domains of attraction, switch between persistence and low-density  behavior. In particular, we obtain lower bounds for probabilities that eventually $x_n \in [a,H]$ and
$x_n \in [0,b]$.

As everywhere above, in this subsection we assume that Assumptions \ref{as:chibound}-\ref{as:bmain} 
and conditions \eqref{cond:mainl}, \eqref{cond:lless} hold.  Based on this, we can define
\begin{equation}
\label{def:betaalphal}
\beta_l=\inf\{b<x<a: F(x)>l\}, \quad \alpha_l=\sup\{b<x<a: F(x)<-l\}.
\end{equation}
Note that, since $F(a)>l$, $F(b)<-l$, and $F$ is continuous, both sets in the right-hand-sides 
of formulae in \eqref{def:betaalphal} are non-empty. 

Let $u_l$ and $v_l$ be defined as in \eqref{def:ul} and \eqref{def:vl}, respectively. 
Note  that
\[
v_l<\beta_l\le u_l, \quad v_l\le \alpha_l<u_l,
\]
and 
\[
\max_{x\in [b, \beta_l]}F(x)\le l, \quad \min_{x\in [\alpha_l,a]} F(x) \ge  -l.
\]

The points $a,b,u_l,v_l,\alpha_l,\beta_l$ are illustrated in Figure~\ref{figure_add1}.

\begin{figure}[ht]
\centering
\includegraphics[height=.24\textheight]{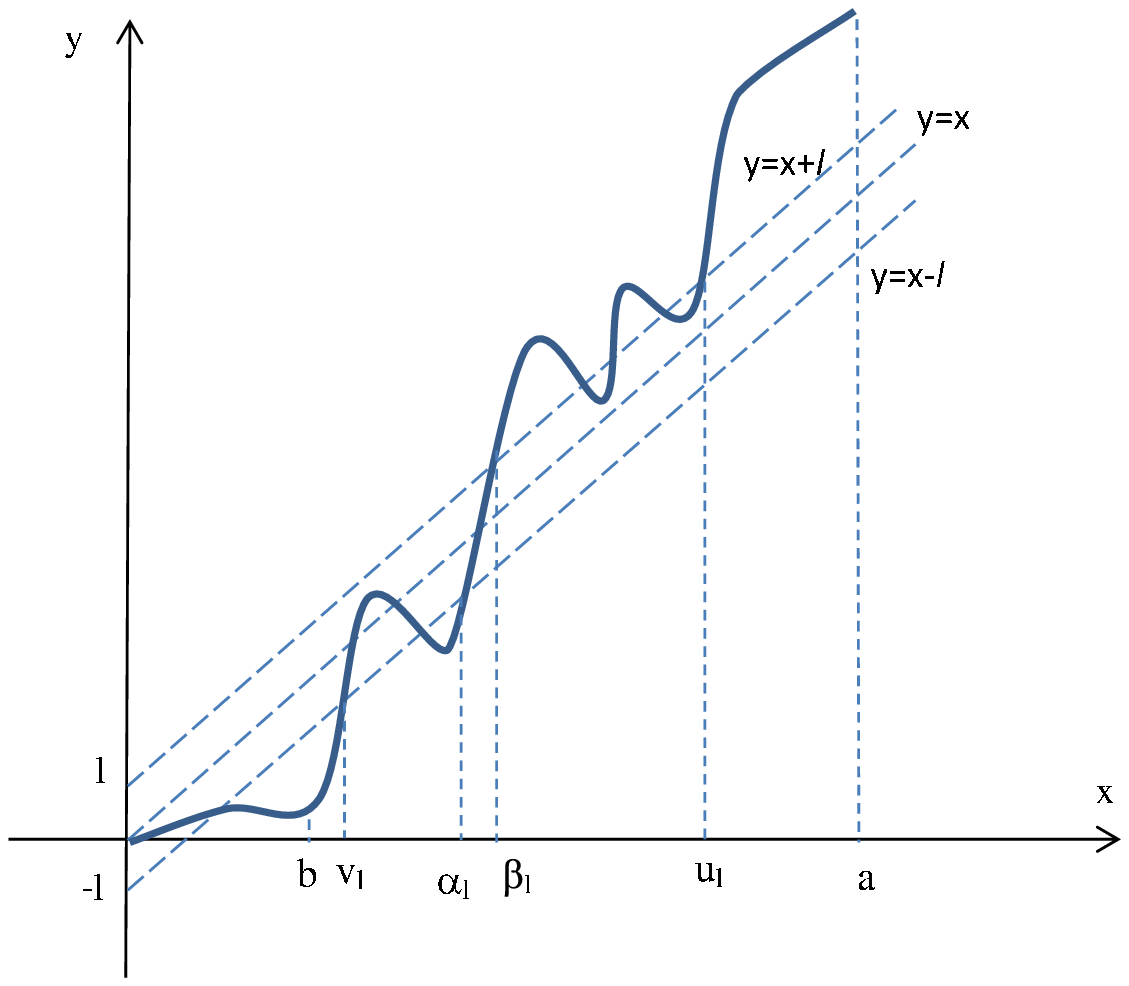}
\hspace{8mm}
\includegraphics[height=.24\textheight]{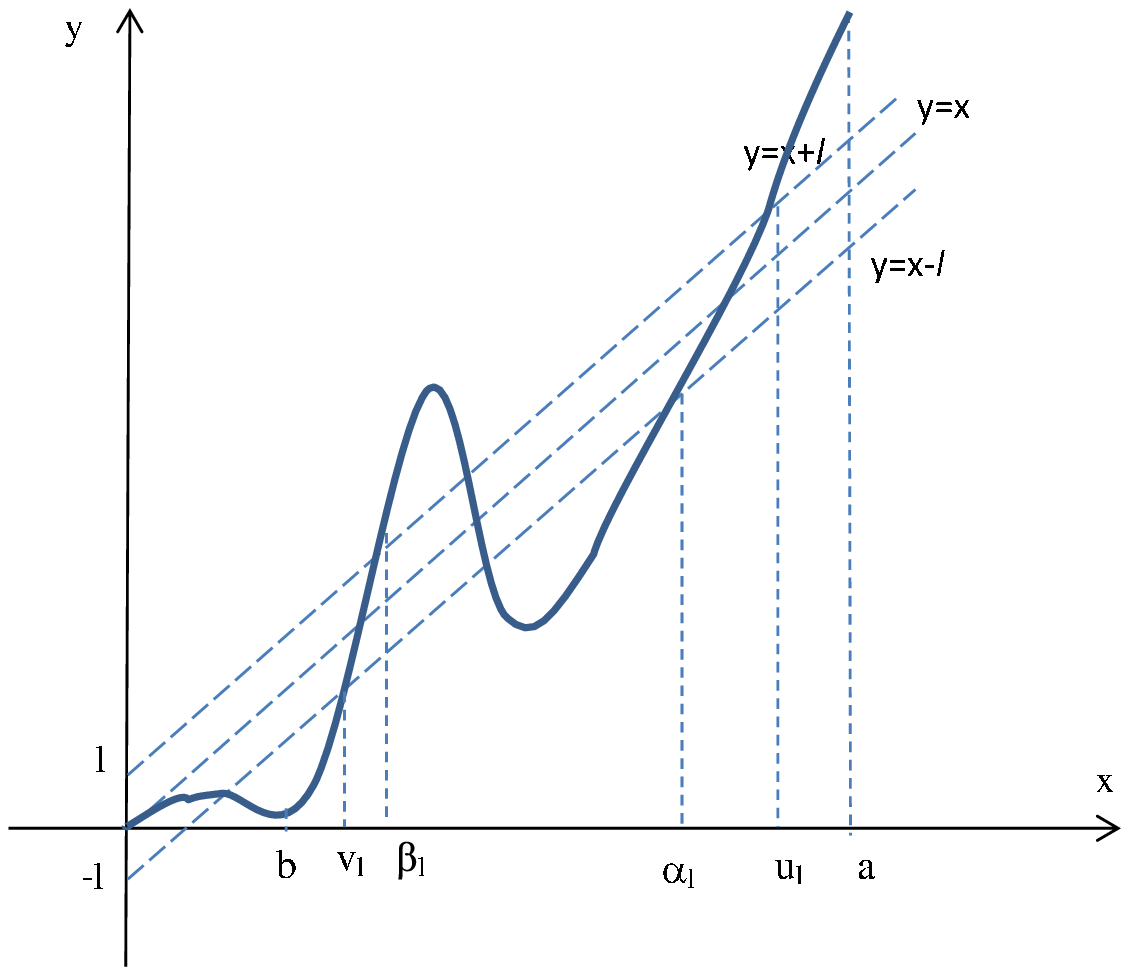}  
%\hspace{10mm}  
%\includegraphics[height=.2\textheight]{plot_example1a.ps}
\caption{An illustration of the points $a,b,u_l,v_l,\alpha_l,\beta_l$ in the two cases: (left) $\alpha_l < \beta_l$
and (right) $\alpha_l > \beta_l$.
}
\label{figure_add1}
\end{figure}

\begin{remark}
It is possible that $\alpha_l>\beta_l$, see Example~\ref{ex:demo1} and Fig.~\ref{figure_add1}, right. 
However, as $F$ is continuous, $F(b)<-l$, $F(\beta_l)=l$, $F(\alpha_l)=-l$, $F(a)>l$,
the inequality $\alpha_l>\beta_l$ immediately implies that there are at least 3 fixed points of $f$ on $(b,a)$.
In this case we are able to prove only ``essential extinction'' for $x_0\in (v_l, \beta_l)$ and persistence for 
$x_0\in (\alpha_l, u_l)$ (see Lemma~\ref{lemma:mixed1} below).

However, if $\alpha_l<\beta_l$, for each $x_0\in (\alpha, \beta)\subset (\alpha_l, \beta_l)$,  a solution persists with a positive probability and also 
reaches the interval $[0, b]$ with a positive probability.  So solutions with the initial value on the non-empty 
interval $(\alpha_l, \beta_l)$ demonstrate mixed behavior (see Corollary~\ref{corol0} below). 
\end{remark} 

\begin{example}
\label{ex:demo1}
Consider \eqref{eq:main} with 
$$
f(x)= \left\{ \begin{array}{ll}  
\displaystyle \frac{3x}{3+(x-2)^2}, & 0 \leq x \leq 1; \\ \\
x - \sin(\pi(x-1))- \frac{1}{4}, & 1 < x \leq 5; \\ \\
\displaystyle \frac{8.55x}{8+(x-6)^2}, & 5 \leq x. 
\end{array} \right.
$$
We can take $a=5.2$, $f(a) \approx 5.4286$, $H=7$, $f^H<6.8$, $f(H)=6.65>a$, $F(H)=-0.35$.
Here the minimum $-\frac{5}{4}$ of $F(x)$ is first attained at $b=\frac{3}{2}$,
however, $\displaystyle F\left( \frac{7}{2}\right)=- \frac{5}{4}$ as well. We consider $l < \min\{ 
F(a),H-f^H,-F(b)\}$, so we can
take $l < \min\{ 0.2286,0.2,1.25 \}$. Then, it is easy to see that $\alpha_l \in (3.5, 5.2)$, $\beta_l \in \left( \frac{3}{2},\frac{5}{2} 
\right) =(1.5,2.5)$, so $\beta_l < \alpha_l$. There are exactly 4 fixed points of $f(x)=x - \sin(\pi(x-1))- 1/4$ on
$[1,5]$ which are $\arcsin(0.25)/\pi+1$, $2-\arcsin(0.25)/\pi$, $\arcsin(0.25)/\pi+3$, $4-\arcsin(0.25)/\pi$,
and a fixed point $\approx 5.106$ on $(5, 5.2)$.
\end{example}

Let  $x_0 \in (\alpha_l, u_l]$. Define
\begin{equation}
\label{add}
A=A(x_0):=\min_{x\in [x_0,u_l]} F(x) > -l
\end{equation}
and
\begin{equation}
\label{def:prob1}
p_1=p_1(x_0):=\mathbb P\left\{\omega\in \Omega: \chi(\omega)\ge  1-\frac{l+A}{2l} \right\}, \quad 
K_1=K_1(x_0):=\left[\frac{2(u_l-x_0)}{l+A}\right]+1.
\end{equation}

Let  $x_0 \in [v_l, \beta_l)$. Define
\begin{equation}
\label{subtract}
B=B(x_0):=\max_{x\in [v_l,x_0]} F(x) < l
\end{equation}
and
\begin{equation}
\label{def:prob2}
p_2=p_2(x_0):=\mathbb P\left\{\omega\in \Omega: \chi(\omega)\le  -1+\frac{l-B}{2l} \right\}, \quad 
K_2=K_2(x_0):=\left[\frac{2(x_0-v_l)}{l-B}\right]+1.
\end{equation}

\begin{lemma}
\label{lemma:mixed1}
Let Assumptions  \ref{as:chibound}-\ref{as:bmain} hold and $l$ satisfy conditions \eqref{cond:mainl} and \eqref{cond:lless}, 
where $b$ is defined as in \eqref{def:b1}. Let $(x_n)$ be a solution to \eqref{eq:main} with $x_0\in [0, H]$, and
$\alpha_l$,  $\beta_l$ be denoted by \eqref{def:betaalphal}.

Then the following statements are valid.
\begin{enumerate}
\item [(i)] 
If  $x_0 \in (\alpha_l, H]$  then the solution $x_n$ will eventually get  into the 
interval $[a, H]$ with the persistence probability $P_p$ such that  
$$P_p\ge p_1^{K_1},$$
where $p_1$ and $K_1$ are defined in \eqref{def:prob1}.

\item [(ii)] 
If $x_0 \in [0, \beta_l)$  then the
solution $x_n$ will eventually get into the interval $[0, b]$  with the ``low density" (``essential extinction") probability $P_e$ satisfying 
$$ 
P_e \ge p_2^{K_2},
$$
where $p_2$ and $K_2$ are defined in \eqref{def:prob2}.
\end{enumerate}
\end{lemma}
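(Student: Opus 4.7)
The two parts are symmetric; I will plan part (i) in detail and indicate the obvious dualization for (ii). The strategy is the same push-by-favorable-noise argument used in Theorem~\ref{lem:add2}, combined with Theorem~\ref{lem:vul}, which handles the ``easy'' regions $x_0\in (u_l,H]$ and $x_0\in [0,v_l)$ a.s.

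\textbf{Reduction and the increment bound for (i).} If $x_0\in (u_l,H]$, Theorem~\ref{lem:vul}(iv) already gives the conclusion a.s., so $P_p=1\ge p_1^{K_1}$ trivially. It therefore suffices to treat $x_0\in (\alpha_l,u_l]$. By the definition of $\alpha_l$ as a supremum, continuity of $F$, and the fact that $F(u_l)=l>0$, one checks that $A=A(x_0)=\min_{x\in [x_0,u_l]}F(x)>-l$; so on $[x_0,u_l]$ we have $f(x)\ge x+A$. Next, I would verify the deterministic step: if $x_n\in [x_0,u_l]$ and $\chi_{n+1}\ge 1-(l+A)/(2l)$, then
\[
x_{n+1}=f(x_n)+l\chi_{n+1}\ge x_n+A+l-\tfrac{l+A}{2}=x_n+\tfrac{l+A}{2}.
\]
Since $l+A>0$, such a step is a strict increase of size at least $(l+A)/2$.

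\textbf{Escape from $[x_0,u_l]$ with probability $\ge p_1^{K_1}$.} Consider the event $E=\{\chi_i\ge 1-(l+A)/(2l)\text{ for }i=1,\dots,K_1\}$. By independence and Assumption~\ref{as:chibound}, $\mathbb P(E)=p_1^{K_1}$. On $E$ I argue by contradiction: suppose $x_n\in [x_0,u_l]$ for all $n=0,\dots,K_1$. Then applying the step inequality $K_1$ times gives
\[
x_{K_1}\ge x_0+K_1\cdot\tfrac{l+A}{2}>x_0+(u_l-x_0)=u_l,
\]
by the choice of $K_1$ in \eqref{def:prob1}, contradicting $x_{K_1}\le u_l$. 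Hence on $E$ there exists some $n\le K_1$ with $x_n>u_l$; by Lemma~\ref{lem:main0}(i) we also have $x_n\le H$, so $x_n\in (u_l,H]$. Applying Theorem~\ref{lem:vul}(iv) at the (a.s.\ finite) stopping time of first entry into $(u_l,H]$ via the strong Markov-type structure induced by the i.i.d.\ sequence $(\chi_n)$ yields $x_k\in [a,H]$ for all sufficiently large $k$, a.s.\ on $E$. This proves $P_p\ge \mathbb P(E)=p_1^{K_1}$.

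\textbf{Part (ii) and the main difficulty.} Part (ii) is obtained by mirroring the argument: for $x_0\in [v_l,\beta_l)$ we have $B=\max_{x\in[v_l,x_0]}F(x)<l$, so $f(x)\le x+B$ on $[v_l,x_0]$, and a large negative noise $\chi_{n+1}\le -1+(l-B)/(2l)$ produces a decrement $x_{n+1}\le x_n-(l-B)/2$ as long as $x_n\in [v_l,x_0]$. After $K_2$ consecutive such steps, the solution must drop below $v_l$, after which Theorem~\ref{lem:vul}(iii) delivers $x_n\in [0,b]$ eventually, a.s.; the case $x_0\in [0,v_l)$ is immediate from Theorem~\ref{lem:vul}(iii). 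The only subtlety I anticipate is the careful justification that once the solution escapes the ``push'' corridor (into $(u_l,H]$ or below $v_l$), we may legitimately invoke Theorem~\ref{lem:vul} starting from that random time; this is handled by conditioning on the value at the escape time and using that the tail $(\chi_{n+k})_{k\ge 1}$ is independent of $\mathcal F_n$ with the same distribution, so Theorem~\ref{lem:vul} applies pathwise on the shifted sequence. Beyond this conceptual check, the remaining work is the two straightforward inequality verifications above.
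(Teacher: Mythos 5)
Your proposal is correct and follows essentially the same route as the paper's proof: reduce via Theorem~\ref{lem:vul} to $x_0\in(\alpha_l,u_l]$ (resp.\ $[v_l,\beta_l)$), force an increment of at least $(l+A)/2$ on the event that $K_1$ consecutive noises are favorable, conclude escape past $u_l$ within $K_1$ steps, and then invoke Theorem~\ref{lem:vul}(iv). The paper reaches the same conclusion through an explicit decomposition of the favorable event into the disjoint pieces $\mathcal A_{k1},\mathcal A_{k2}$ rather than your cleaner contradiction argument, and it applies Theorem~\ref{lem:vul} at the escape time without the independence/shift justification you supply, but these are presentational differences only.
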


\begin{proof} 
Let $u_l$ and $v_l$ be defined by \eqref{def:ul}, \eqref{def:vl}, respectively.  
By Theorem~\ref{lem:vul}, it is enough to prove (i) for $x_0\in 
(\alpha_l, u_l]$ and  (ii) for $x_0\in [v_l, \beta_l)$.

%!!!!!!!!!!!!!!!!!

(i) Let $A$, $K_1$ and $p_1$ be defined, respectively, as in \eqref{add} and \eqref{def:prob1}. We set
\[
\Omega_k:=\left\{\omega\in \Omega: \chi_k(\omega) \geq 1- \frac{l+A}{2l}  \right\} ,
\quad k=1, 2, \dots, K_1, \quad \mathcal A:=\bigcap_{k=1}^{K_1} \Omega_k.
\]
Note that
\[
\mathbb P\left[\mathcal A \right]=p_1^{K_1}.
\]
We prove that  
\begin{equation}
\label{proof:1}
\text{For each $\omega\in \mathcal A$ there exists   a number $n\le K_1$, such that $x_n(\omega)>u_l.$}
\end{equation}
By \eqref{add},  $F(x) \geq A>-l$, for any $x \in [x_0,u_l]$.
Since $\mathcal A\subseteq \Omega_1$ we have, on $\mathcal A$,
\[   
x_1=f(x_0)+l\chi_1=x_0+F(x_0)+l\chi_1 \geq x_0 + A +l\left( 1- \frac{l+A}{2l} \right)= x_0 + \frac{l+A}{2}.
\]
Similarly, for each $k=1, 2, \dots, K_2-1$,  if $x_k\in  [x_0, u_l]$,
since $\mathcal A\subseteq \Omega_k$ and $F(x_k) \geq A>-l$,   we have, on $\mathcal  A$,
\[
x_{k+1} \geq x_k+\frac{l+A}{2}.
\]
The set  $\mathcal A$ can be presented as
\[
\mathcal A=\mathcal A_{11}\cup \mathcal A_{12},\quad \mathcal A_{11}\cap \mathcal A_{12}=\emptyset,
\]
where
\[
\mathcal A_{11}:=\left\{\omega\in \mathcal A: x_1(\omega)>u_l   \right\},
\quad \mathcal A_{12}:=\left\{\omega\in \mathcal A: x_1(\omega)\in \left(x_0 + \frac{l+A}{2}, \,u_l \right]  \right\}.
\]

If $\mathbb P\left[ \mathcal A_{12} \right]=0$, we have, a.s.,  $\mathcal A=\mathcal A_{11}$.
So \eqref{proof:1} holds  a.s. on $\mathcal A$ with $n=1$.
  
If $\mathbb P\left[ \mathcal A_{12} \right]>0$,  we have, on $\mathcal A_{12}$,
  \[
 x_2\geq x_1 + \frac{l+A}{2}\geq x_0+2\frac{l+A}2.
  \]
Presenting  $\mathcal A_{12}$ in the same way as above,
\[
\mathcal A_{12}=\mathcal A_{21}\cup \mathcal A_{22},\quad \mathcal A_{21}\cap \mathcal A_{22}=\emptyset,
\]
where
\[   
\mathcal A_{21}:=\left\{\omega\in \mathcal A_{12}: x_2(\omega)>u_l   \right\},
\quad \mathcal A_{22}:=\left\{\omega\in \mathcal A_{12}: x_2(\omega)\in \left(x_0 + 2\frac{l+A}{2}, \,u_l \right]
\right\},
\]
we consider again two cases: $\mathbb P\left[ \mathcal A_{22} \right]=0$
and $\mathbb P\left[ \mathcal A_{22} \right]>0$.
If $\mathbb P\left[ \mathcal A_{22} \right]=0$, we have, a.s.,
$\mathcal A=\mathcal A_{11}\cup \mathcal A_{21}$, so \eqref{proof:1} holds
with $n=1$ on $\mathcal A_{11}$ and $n=2$ on $\mathcal A_{21}$.
If $\mathbb P\left[ \mathcal A_{22} \right]>0$ we continue the process.
 
Analogously, if  $\mathbb P\left[ \mathcal A_{k-1, 2} \right]>0$, for some $k<K_1$,  we set
\[
\mathcal A_{k-1, 2}=\mathcal A_{k1}\cup \mathcal A_{k2},\quad \mathcal A_{k1}\cap \mathcal A_{k2}=\emptyset,
\]
where
\[   
\mathcal A_{k1}:=\left\{\omega\in \mathcal A_{k-1, 2}: x_k(\omega)>u_l   \right\},
\quad \mathcal A_{k2}:=\left\{\omega\in \mathcal A_{k-1,2}: x_k(\omega)
\in \left(x_0 + k\frac{l+A}{2}, \,u_l \right]  \right\}.
\]
When  $\mathbb P\left[ \mathcal A_{k, 2} \right]=0$, we have, a.s.,
$\mathcal A=\cup_{i=1}^k\mathcal A_{i1}$, so \eqref{proof:1} holds  with $n=i$ on $\mathcal A_{i1}$,
$i=1, 2, \dots, k$.  When  $\mathbb P\left[ \mathcal A_{k, 2} \right]>0$, we continue the process.
However, by \eqref{def:prob1},    $x_0+K_1 \frac{l+A}{2}>u_l$,  so $\mathcal A_{K_1, 2}=\emptyset$. Then
 $\mathcal A$ can be presented as $\cup_{i=1}^k\mathcal A_{i1}$ where $k$ does not exceed  $K_1$.
 This proves \eqref{proof:1}, so the solution reaches the interval $[u_l, H]$ after at most $K_1$ steps with
a probability at least $p_1^{K_1}$.   Application of Theorem~\ref{lem:vul}, (iv), completes the proof of 
(i).
 
Part (ii) can be proved in a similar way. For $B$, $K_2$ and $p_2$ defined, respectively,  
as in \eqref{subtract} and \eqref{def:prob2},  we set
$$
\Gamma_k:=\left\{\omega\in \Omega: \chi_k(\omega)<-1+\frac{l-B}{2l}  \right\}, \quad k=1, 2, \dots, K_2,
\quad \mathcal B:=\bigcap_{k=1}^{K_2} \Gamma_k,$$
and notice that
$$
 \mathbb P\left[\mathcal B \right] =p_2^{K_2}.
$$   
By \eqref{subtract},  $F(x) \leq B<l$, for any $x \in [v_l,x_0]$.
Then,  on $\mathcal  B$, if $x_k\in  [v_l,x_0]$,   $k=1, 2, \dots, K_2-1$, we get
\[
x_{k+1} \leq x_k-\frac{l-B}{2}.
\]
Noting that $x_0-K_2\frac{l-B}{2}<v_l$, we show that for each $\omega\in \mathcal B$, there exists   
a number $n\le K_2$, such that $x_n(\omega)<v_l.$
So the solution reaches the interval $[0, v_l]$ after at most $K_2$ steps with the probability
at least $p_2^{K_2}$.
Application of Theorem~\ref{lem:vul}, (iii), completes the proof of (ii).
%!!!!!!!!!!!!!!!!!!!!!
\end{proof}

\begin{remark}
Under the assumptions of Lemma \ref{lemma:mixed1},  
\begin{enumerate}
\item [(i)]
the persistence probability $P_p$ and  the ``low density'' probability $P_e$  depend on $x_0$;  
\item [(ii)]
the number $K_1$   indicates the number of steps necessary for a solution $x_n$ with the initial value $x_0\in 
(\alpha_l, u_l)$ to get into 
the interval $(u_l, H]$.  
Respectively, $K_2$ is the number of steps required for a solution with the initial value $x_0\in (v_l, \beta_l)$ to get into the 
interval $(0, v_l)$.
\end{enumerate}
\end{remark}
 
\begin{remark}
Estimations of probabilities $P_p(x_0)$ and $P_e(x_0)$ are far from being sharp. 
They can be improved under the assumption that $F$ is increasing, if, on each step, 
we estimate the new probability to move right $(A+l)/2$ units (respectively, left $(l-B)/2$ units), 
see Theorem \ref{lem:Fmonimpr} and Corollary~\ref{rem:unit} below. 
\end{remark}

\begin{corollary}
\label{corol0}
Let the conditions of Lemma \ref{lemma:mixed1} hold,
and $x_n$ be a solution of \eqref{eq:main} with the initial value $x_0\in [0, H]$.

\begin{enumerate}
\item [(i)] 
If  $\alpha\in (\alpha_l, u_l)$,  we can estimate the persistence probability $P_p(\alpha)$  uniformly   
for all  initial values $x_0 \in [\alpha, H]$.  
Similarly,  if $\beta\in (v_l, \beta_l)$ we can estimate the  ``low density''  probability $P_e(\beta)$  uniformly  for all initial values  $x_0 \in [0, \beta]$.

\item [(ii)]
If $\alpha_l<\beta_l$,  for each $x_0\in (\alpha_l, \beta_l)$ a solution persists with a positive probability and also 
reaches the interval $[0, b]$ with a positive probability.  
For  $(\alpha, \beta)\subset (\alpha_l, \beta_l)$, we can find estimation of $P_p$  
and $P_e$ valid  for all  $x_0 \in (\alpha, \beta)$.
\end{enumerate}
\end{corollary}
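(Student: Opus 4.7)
The plan is to exploit monotonicity of the quantities $A(x_0)$, $K_1(x_0)$, $p_1(x_0)$ (and symmetrically $B(x_0)$, $K_2(x_0)$, $p_2(x_0)$) in $x_0$, so that the bounds from Lemma~\ref{lemma:mixed1} evaluated at an endpoint dominate those at every interior point. No new probabilistic machinery is needed; the work is essentially bookkeeping on top of Theorem~\ref{lem:vul} and Lemma~\ref{lemma:mixed1}.

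For part (i), fix $\alpha\in(\alpha_l,u_l)$ and split $[\alpha,H]=[\alpha,u_l]\cup[u_l,H]$. On $[u_l,H]$, Theorem~\ref{lem:vul}(iv) gives $P_p(x_0)=1$, so any bound $\le 1$ holds trivially. On $[\alpha,u_l]$ the proposal is to observe that, by \eqref{add}, the interval $[x_0,u_l]$ shrinks as $x_0$ increases, hence $A(x_0)\ge A(\alpha)$; combining this with $u_l-x_0\le u_l-\alpha$ in the definition \eqref{def:prob1} gives $K_1(x_0)\le K_1(\alpha)$, and the threshold $1-(l+A(x_0))/(2l)$ decreases in $A$, so $p_1(x_0)\ge p_1(\alpha)$. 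Since $p_1\in(0,1]$ and $K_1\ge 1$, one obtains
\[
P_p(x_0)\;\ge\; p_1(x_0)^{K_1(x_0)}\;\ge\; p_1(\alpha)^{K_1(\alpha)},
\]
uniformly in $x_0\in[\alpha,H]$. The second assertion of (i) is proved symmetrically: for $\beta\in(v_l,\beta_l)$, on $[0,v_l]$ Theorem~\ref{lem:vul}(iii) gives $P_e(x_0)=1$, and on $[v_l,\beta]$ the quantities $B(x_0)$, $K_2(x_0)$, $p_2(x_0)$ from \eqref{subtract}--\eqref{def:prob2} depend monotonically on $x_0$ in the opposite direction, yielding $P_e(x_0)\ge p_2(\beta)^{K_2(\beta)}$ uniformly.

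For part (ii), suppose $\alpha_l<\beta_l$. Recall from the discussion preceding Lemma~\ref{lemma:mixed1} that $v_l\le\alpha_l<u_l$ and $v_l<\beta_l\le u_l$, so $(\alpha_l,\beta_l)\subset(\alpha_l,u_l)\cap(v_l,\beta_l)$. Hence every $x_0\in(\alpha_l,\beta_l)$ satisfies the hypotheses of both parts of Lemma~\ref{lemma:mixed1}, yielding simultaneously $P_p(x_0)\ge p_1(x_0)^{K_1(x_0)}>0$ and $P_e(x_0)\ge p_2(x_0)^{K_2(x_0)}>0$ (positivity of $p_1,p_2$ comes from Assumption~\ref{as:chibound}). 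For a subinterval $(\alpha,\beta)\subset(\alpha_l,\beta_l)$ one applies part (i) at the left endpoint $\alpha$ for $P_p$ and at the right endpoint $\beta$ for $P_e$ to obtain uniform positive lower bounds for all $x_0\in(\alpha,\beta)$.

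The only point requiring some care is the monotonicity argument in (i): one must verify that on $[\alpha,u_l]$ all three dependencies $A(\cdot)$, $K_1(\cdot)$, $p_1(\cdot)$ move in directions that reinforce, rather than cancel, one another in the product $p_1^{K_1}$. This is where the observation that $p_1\le 1$ and $K_1\ge 1$ are both used, so that raising $p_1$ and lowering $K_1$ simultaneously increase $p_1^{K_1}$. I expect this to be the main (albeit routine) obstacle; once it is verified, the rest reduces to invoking Theorem~\ref{lem:vul} on the complementary regions and assembling the pieces.
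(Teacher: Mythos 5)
Your proposal is correct and follows essentially the same route as the paper: both reduce the uniform estimate to the endpoint $\alpha$ (resp.\ $\beta$) and dispose of the complementary region $[u_l,H]$ (resp.\ $[0,v_l]$) via Theorem~\ref{lem:vul}. The only difference is mechanical — the paper reruns the induction of Lemma~\ref{lemma:mixed1} with the endpoint constants $A(\alpha)$, $K_1(\alpha)$, $p_1(\alpha)$, whereas you obtain the same bound by noting that $A(\cdot)$ and $p_1(\cdot)$ are non-decreasing and $K_1(\cdot)$ non-increasing on $[\alpha,u_l]$, so that $p_1(x_0)^{K_1(x_0)}\ge p_1(\alpha)^{K_1(\alpha)}$ — and both arguments are valid.
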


\begin{proof}
If $\alpha\in (\alpha_l, a)$ then
%\begin{equation}
%\label{nosubtract}
$\displaystyle \min_{x\in [\alpha,a]} F(x) > -l$,
%\end{equation}
and if $\beta\in (b, \beta_l)$ then 
%\begin{equation}
%\label{noadd} 
$\displaystyle \max_{x\in [b,\beta]} F(x) < l$.
%\end{equation}
 
In order to prove (i), we choose 
\begin{equation*}
\begin{split}
&A(\alpha):=\min_{x\in [\alpha,u_l]} F(x) > -l, \quad B(\beta):=\min_{x\in [v_l, \beta]} F(x) <l,\\
&p_1(\alpha):=\mathbb P\left\{\omega\in \Omega: \chi(\omega)\ge  1-\frac{l+A(\alpha)}{2l} \right\}, \quad  \, \, \, \,
K_1(\alpha):=\left[\frac{2(u_l-\alpha)}{l+A(\alpha))}\right]+1,\\
&p_2(\beta):=\mathbb P\left\{\omega\in \Omega: \chi(\omega)\le  -1+\frac{l-B(\beta)}{2l} \right\}, \quad 
K_2(\beta):=\left[\frac{2(\beta-v_l)}{l-B(\beta)}\right]+1.
\end{split}
\end{equation*}
Taking any $x_0\in [\alpha,u_l]$ and following the proof of Lemma \ref{lemma:mixed1}, after at most $K_1(\alpha)$ steps we
have on $\displaystyle \cap_{i=1}^{K_1(\alpha)}\left\{\omega\in \Omega_i: \chi_i(\omega)\ge  1-(l+A(\alpha))/(2l) \right\}$ :
\[
x_n\ge x_0+K_1(\alpha)\frac{l+A(\alpha)}{2} = \alpha+\left(\left[\frac{2(u_l-\alpha)}{l+A(\alpha))}\right]+1  \right)\frac{l+A(\alpha)}{2}\ge \alpha +u_l-\alpha=u_l.
\]
So the persistence probability $P_p$ satisfies the first of the two estimates
\begin{equation}
\label{est:ppe}
P_p\ge p_1(\alpha)^{K_1(\alpha)}, \quad P_e\ge p_2(\beta)^{K_2(\beta)}.
\end{equation}
A similar  estimation can be done for any $x_0\in [v_l, \beta]$, and the "low density"  probability $P_e$ satisfies the 
second estimate in \eqref{est:ppe}. 

Case  (ii) follows from  case (i),  since for any $x_0 \in (\alpha, \beta)$, estimations of both probabilities  
$P_p$  and $P_e$ in 
\eqref{est:ppe} are valid.
\end{proof}

The proof of the following Lemma is straightforward and thus will be omitted.

\begin{lemma}
\label{lem:uvab}
Let the assumptions of Lemma \ref{lemma:mixed1} hold.
\begin{enumerate}
\item  
The inequality
\begin{equation}
\label{cond:Flbound1}
|F(x)|<l, \quad x\in (v_l, u_l),
\end{equation}
is equivalent to  $\beta_l=u_l$ and  $\alpha_l=v_l$. 
\item In particular, condition \eqref{cond:Flbound1} holds  if 
\begin{equation}
\label{cond:f'1}
f(x_2) - f(x_1) > x_2 - x_1 \mbox{\rm ~ for any ~~ } v_l \leq x_1 < x_2 \leq u_l. 
\end{equation}
\end{enumerate}
\end{lemma}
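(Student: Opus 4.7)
My plan is to handle the three constituent implications separately: both directions of the equivalence in part~(1) and the sufficiency claim of part~(2). The core ingredients I will use are the continuity of $F$, the boundary identities $F(v_l)=-l$ and $F(u_l)=l$ supplied by Theorem~\ref{lem:vul}(ii), the auxiliary bounds $F\le -l$ on $(b,v_l]$ and $F\ge l$ on $[u_l,a)$ from the same theorem, and the location inequalities $v_l<\beta_l\le u_l$ and $v_l\le \alpha_l<u_l$ recorded immediately before Figure~\ref{figure_add1}.

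For the forward direction of part~(1), I would suppose $|F(x)|<l$ on $(v_l,u_l)$ and examine the set $S_+:=\{x\in(b,a):F(x)>l\}$. The strict inequality $F(x)<l$ removes $(v_l,u_l)$ from $S_+$; the bound $F\le -l<l$ removes $(b,v_l]$; and $F(u_l)=l$ removes the endpoint $u_l$. Hence $S_+\subseteq(u_l,a)$, which yields $\beta_l=\inf S_+\ge u_l$, and combined with $\beta_l\le u_l$ gives $\beta_l=u_l$. The symmetric argument applied to $S_-:=\{x\in(b,a):F(x)<-l\}$ forces $\alpha_l\le v_l$, hence $\alpha_l=v_l$.

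For the reverse direction, I would assume $\beta_l=u_l$ and $\alpha_l=v_l$ and take any $x\in(v_l,u_l)$. The strict inequality $x<\beta_l$ combined with the infimum definition of $\beta_l$ yields $F(x)\le l$, and similarly $x>\alpha_l$ yields $F(x)\ge -l$. To upgrade these to the strict bounds of \eqref{cond:Flbound1}, I would argue by contradiction: if $F(x_0)=l$ at some interior $x_0\in(v_l,u_l)$, then the defining property $u_l=\sup\{u<a:F(u)<l\}$ supplies points in $(x_0,u_l)$ with $F$ strictly below $l$, while $F(u_l)=l$ and $F\ge l$ on $(u_l,a)$ with $F(a)>l$ supply points strictly above $l$ arbitrarily close to $u_l$; continuity of $F$ together with $F(x_0)=l$ then produces a zero of $F-l$ below $u_l$ in whose neighborhood $F$ strictly exceeds $l$, contradicting $\beta_l=u_l$. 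The case $F(x_0)=-l$ is handled symmetrically using the defining property of $v_l$.

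For part~(2), the hypothesis \eqref{cond:f'1} is just the statement that $F(x)=f(x)-x$ is strictly increasing on $[v_l,u_l]$, since $F(x_2)-F(x_1)=\bigl(f(x_2)-f(x_1)\bigr)-(x_2-x_1)>0$ is precisely \eqref{cond:f'1}. Strict monotonicity together with $F(v_l)=-l$ and $F(u_l)=l$ immediately gives $-l<F(x)<l$ for every $x\in(v_l,u_l)$, which is \eqref{cond:Flbound1}. The main obstacle I anticipate is the strictness step in the reverse direction of part~(1): the infimum and supremum definitions of $\beta_l$ and $\alpha_l$ deliver only the non-strict bounds $F\le l$, $F\ge -l$ automatically, and ruling out an interior tangency $F(x_0)=\pm l$ is where the continuity of $F$ and the interplay between the sup defining $u_l$ (respectively inf defining $v_l$) and the inf defining $\beta_l$ (respectively sup defining $\alpha_l$) must be used with care.
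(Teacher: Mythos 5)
The paper gives no proof of Lemma~\ref{lem:uvab} (it is declared ``straightforward'' and omitted), so there is no official argument to compare against; your proposal must stand on its own. Your treatment of the forward implication of part~(1) and of part~(2) is correct: the set inclusion $\{x:F(x)>l\}\subseteq(u_l,a)$ combined with $\beta_l\le u_l$ does give $\beta_l=u_l$ (and symmetrically $\alpha_l=v_l$), and \eqref{cond:f'1} is indeed just strict monotonicity of $F$ on $[v_l,u_l]$, which together with $F(v_l)=-l$, $F(u_l)=l$ yields \eqref{cond:Flbound1} immediately.

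The gap is exactly where you feared it: the strictness step in the reverse direction does not work, and in fact the strict implication ``$\beta_l=u_l$ and $\alpha_l=v_l$ $\Rightarrow$ $|F(x)|<l$ on $(v_l,u_l)$'' is false under the stated hypotheses. Your contradiction argument asserts that $F(x_0)=l$ at an interior point would ``produce a zero of $F-l$ below $u_l$ in whose neighborhood $F$ strictly exceeds $l$,'' but nothing forces $F$ to \emph{exceed} $l$ near such a point: a tangential touch suffices. Concretely, take $F$ continuous with $F\le -l$ on $(b,v_l]$, then rising to a local maximum with value exactly $l$ at some $x_0\in(v_l,u_l)$, dipping back below $l$ (while staying above $-l$), and finally crossing $l$ at $u_l$ with $F>l$ on $(u_l,a)$. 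This is compatible with the definitions \eqref{def:ul}, \eqref{def:vl} and with all assumptions of Lemma~\ref{lemma:mixed1}; the set $\{x:F(x)>l\}$ still has infimum $u_l$, so $\beta_l=u_l$ and $\alpha_l=v_l$, yet $|F(x_0)|=l$. What the hypotheses $\beta_l=u_l$, $\alpha_l=v_l$ actually deliver --- by your own first observation, or by the displayed bounds $\max_{x\in[b,\beta_l]}F(x)\le l$ and $\min_{x\in[\alpha_l,a]}F(x)\ge -l$ preceding Figure~\ref{figure_add1} --- is only the non-strict bound $|F(x)|\le l$ on $(v_l,u_l)$. So either the equivalence should be read with $\le$ in \eqref{cond:Flbound1} (in which case your proof, minus the faulty strictness paragraph, is complete), or an extra hypothesis ruling out interior tangencies of $F$ with the levels $\pm l$ is needed; no continuity argument alone can close this.
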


\begin{remark}
\label{rem:monl}
Note that 
\begin{enumerate}
\item [(i)]   $u_l$ is a non-decreasing function of $l$, 
while  $v_l$ is a non-increasing  function of $l$. 
So, for $l_1<l_2<b$ we have $ (v_{l_1}, u_{l_1})\subseteq (v_{l_2}, u_{l_2})$.

\item [(ii)]  $\beta_l$ is a non-decreasing function of $l$, while  $\alpha_l$ is a non-increasing  function of $l$. 

\item [(iii)]  If condition \eqref {cond:Flbound1} holds for some $l=l_1$, it however can fail for some $l=l_2<l_1$ 
(see Example~\ref{ex:demo2}). 

\item [(iv)]  If condition \eqref {cond:f'1} holds for some $l=l_1$  then  \eqref{cond:f'1}, and therefore  \eqref 
{cond:Flbound1},  will be fulfilled for all $l=l_2<l_1$. 
\end{enumerate}
\end{remark}
In the following example we demonstrate the case when \eqref {cond:Flbound1} holds for some $l=l_1$ but does not 
hold  for any smaller $l$.

\begin{example}
\label{ex:demo2}
Consider \eqref{eq:main} with
$$
f(x)= \left\{ \begin{array}{ll}
\displaystyle \frac{16x}{15+(x-3)^2}, & 0\leq x \leq 2, \\ \\
\displaystyle x-\frac{1}{4x} \sin\left(\frac{\pi}{2}x \right), & 1 < x \leq 12, \\ \\
\displaystyle \frac{x-10}{1+(x-13)^2}+11, & 12 < x .
\end{array}\right.
$$
Then $b \approx 0.945$, $F(b) \approx -0.1584$. The maximum of $f(x)$ for $x>12$ is attained at $x \approx 13.162$ and equals $f^H 
\approx 14.081$.
Take $a=12.3$, $H=14.5$, $f(a) \approx 12.5436$, $F(a) \approx 0.2436$, $f(H) \approx 12.3846$, $F(H) \approx -2.1154$, $H-f^H \approx 
0.419$,  
then we can take any $l<0.24$. On $[2,12]$ local maxima of $F(x)=1/12,1/28,1/44$ are attained at $x=3,7,11$, respectively,
and local minima of $F(x)=-1/20,-1/36$ at $x=5,9$, respectively. Thus for $l \in (1/12,6/25)$, inequality 
\eqref{cond:Flbound1} holds while for $l \in (0, 1/12)$ it fails.
\end{example}

Theorem~\ref{lem:vul}, Lemma~\ref{lemma:mixed1} and Corollary \ref{corol0} imply the following result.

\begin{theorem}
\label{thm:mixed}
Suppose that Assumptions \ref{as:chibound} - \ref{as:bmain} hold, $b$ be  denoted in \eqref{def:b1}, $l$ satisfies conditions 
\eqref{cond:mainl} and \eqref{cond:lless}, and condition \eqref{cond:Flbound1} holds.  
Let $u_l$ be defined as in \eqref{def:ul} and $v_l$ be defined as in \eqref{def:vl}, 
$(x_n)$ be a solution of \eqref{eq:main} with $x_0\in [0, H]$.

Then the following statements are valid.
\begin{enumerate}
\item [(i)] If $x_0\in (0, v_l)$ then there exists $n_1 \in {\mathbb N}$ such that $x_n \in [0,b]$ a.s. for $n \geq n_1$.
\item [(ii)] If $x_0\in (u_l, H)$ then $x$ persists a.s.; moreover, there exists $n_2 \in {\mathbb N}$ such that $x_n \geq a$ 
a.s. for $n \geq n_2$.
\item [(iii)] If $x_0\in (v_l, u_l)$ then $x$ persists with a positive probability 
and eventually belongs to $(0, b)$ with  a positive probability.
\end{enumerate}
\end{theorem}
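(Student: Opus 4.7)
The plan is to obtain this theorem as essentially a bookkeeping synthesis of the three earlier results named right before the statement. Parts (i) and (ii) will follow verbatim from Theorem~\ref{lem:vul}, while the additional hypothesis \eqref{cond:Flbound1} is exactly the ingredient that collapses the mixed-behavior interval of Corollary~\ref{corol0}(ii) onto the full interval $(v_l, u_l)$, delivering part (iii).

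First I would dispatch (i) and (ii). The hypotheses of Theorem~\ref{lem:vul} are exactly Assumptions~\ref{as:chibound}--\ref{as:bmain} together with \eqref{cond:mainl} and \eqref{cond:lless}, all of which are assumed here; note in particular that \eqref{cond:lless} is the condition $l<b-f(b)$ that guarantees $b_1\geq b$ can be chosen so Assumption~\ref{as:bmain} applies with the $b$ from \eqref{def:b1}. Thus (i) is immediate from Theorem~\ref{lem:vul}(iii) and (ii) from Theorem~\ref{lem:vul}(iv), with the same $n_1$ and $n_2$.

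For (iii), the key observation is Lemma~\ref{lem:uvab}(1): the assumed inequality $|F(x)|<l$ on $(v_l,u_l)$ is equivalent to $\beta_l=u_l$ and $\alpha_l=v_l$. In particular $\alpha_l<\beta_l$, so Corollary~\ref{corol0}(ii) applies and the ``mixed-behavior'' interval $(\alpha_l,\beta_l)$ coincides with $(v_l,u_l)$. Fixing any $x_0\in(v_l,u_l)$, I would then invoke the uniform estimates \eqref{est:ppe} of Corollary~\ref{corol0}(i) with any choice $\alpha\in(\alpha_l,x_0]$ and $\beta\in[x_0,\beta_l)$: the quantities $A(\alpha),B(\beta)$ defined via \eqref{add}, \eqref{subtract} satisfy $A(\alpha)>-l$ and $B(\beta)<l$ by the definitions \eqref{def:ul}, \eqref{def:vl} of $u_l,v_l$, hence the cut-off points $1-(l+A)/(2l)$ and $-1+(l-B)/(2l)$ lie strictly inside $(-1,1)$. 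By Assumption~\ref{as:chibound} ($\phi>0$ on $(-1,1)$) the probabilities $p_1(\alpha),p_2(\beta)$ are strictly positive, so $P_p(x_0)\geq p_1(\alpha)^{K_1(\alpha)}>0$ and $P_e(x_0)\geq p_2(\beta)^{K_2(\beta)}>0$, which is exactly the conclusion of (iii).

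There is no real obstacle; the only care needed is in part (iii) to verify that picking $x_0\in(v_l,u_l)$ rather than inside the a~priori narrower interval $(\alpha_l,\beta_l)$ is legitimate, and this is precisely where \eqref{cond:Flbound1} and Lemma~\ref{lem:uvab} are used. I would keep the write-up short, pointing the reader to Theorem~\ref{lem:vul}, Lemma~\ref{lem:uvab} and Corollary~\ref{corol0} rather than repeating the inductive step-by-step argument from the proof of Lemma~\ref{lemma:mixed1}.
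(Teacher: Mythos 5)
Your proposal is correct and follows exactly the route the paper intends: the paper offers no separate proof of Theorem~\ref{thm:mixed}, stating only that it is implied by Theorem~\ref{lem:vul}, Lemma~\ref{lemma:mixed1} and Corollary~\ref{corol0}, and your write-up simply makes that implication explicit (with Lemma~\ref{lem:uvab} supplying the identification $\alpha_l=v_l$, $\beta_l=u_l$ under \eqref{cond:Flbound1}). No gaps; the positivity of $p_1,p_2$ via $\phi>0$ on $(-1,1)$ is the only point needing care, and you handle it.
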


\begin{lemma}
\label{lem:contuvl}
Let assumptions of Theorem~\ref{lem:vul} hold, and the density  $\phi$ be bounded on $[-1, 1]$ by some $C>0$:
\[
\phi(x) \le C, \quad x\in [-1, 1].
\]
Then $P_e(x_0)\to 1$ as $x_0\downarrow v_l$ and $P_p(x_0)\to 1$ as $x_0\uparrow u_l$.
\end{lemma}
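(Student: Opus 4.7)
The plan is to reduce both convergences to a \emph{single-step} estimate: for $x_0$ close to $u_l$ from below (respectively, to $v_l$ from above), I will show that a single realization of $\chi_1$ suffices, with probability tending to $1$, to push $x_1$ into the already-settled region $(u_l, H)$ (respectively, $[0, v_l)$). From there Theorem~\ref{lem:vul}(iv) (respectively, (iii)) delivers a.s.\ persistence (respectively, a.s.\ low density), giving the corresponding lower bound on $P_p$ (respectively, $P_e$).

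Concretely, write $x_1 = x_0 + F(x_0) + l\chi_1$. The event $\{x_1 > u_l\}$ coincides with $\{\chi_1 > \theta(x_0)\}$, where $\theta(x_0) := (u_l - x_0 - F(x_0))/l$. Continuity of $F$ together with the identity $F(u_l) = l$ recorded in Theorem~\ref{lem:vul}(ii) yields $\theta(x_0) \to -1$ as $x_0 \uparrow u_l$; in particular $\theta(x_0) \in [-1,1]$ for $x_0$ sufficiently close to $u_l$, and the hypothesis $\phi \le C$ then gives
\[
\mathbb P\{\chi_1 > \theta(x_0)\} = 1 - \int_{-1}^{\theta(x_0)} \phi(u)\,du \ge 1 - C\bigl(1 + \theta(x_0)\bigr) \to 1.
\]
Since $l < H - f^H$ forces $x_1 < H$ deterministically, on $\{x_1 > u_l\}$ we have $x_1 \in (u_l, H)$; applying Theorem~\ref{lem:vul}(iv) at time $1$ (using the independence of $(\chi_k)_{k\ge 2}$ from $\chi_1$) shows that persistence is a.s.\ on this event, so $P_p(x_0) \ge \mathbb P\{x_1 > u_l\} \to 1$.

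The argument for $P_e$ is the mirror image: the event $\{x_1 < v_l\}$ contains $\{f(x_0) + l\chi_1 < v_l\} = \{\chi_1 < \tilde\theta(x_0)\}$ with $\tilde\theta(x_0) := (v_l - x_0 - F(x_0))/l$ (truncation at $0$ only enlarges the event); $F(v_l) = -l$ and continuity of $F$ force $\tilde\theta(x_0) \to 1$ as $x_0 \downarrow v_l$, and the same bounded-density estimate gives $\mathbb P\{\chi_1 < \tilde\theta(x_0)\} \ge 1 - C\bigl(1 - \tilde\theta(x_0)\bigr) \to 1$. Theorem~\ref{lem:vul}(iii) then finishes the argument.

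No serious obstacle is anticipated; the subtlety worth highlighting is that the \emph{upper} bound $\phi \le C$, rather than mere positivity from Assumption~\ref{as:chibound}, is precisely what converts ``the threshold tends to $\pm 1$'' into ``the probability tends to $1$.'' Without such a bound, $\mathbb P\{\chi_1 \in [-1, -1+\eta]\}$ could decay arbitrarily slowly in $\eta$ and the one-step argument would provide no usable rate.
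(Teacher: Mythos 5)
Your proposal is correct and follows essentially the same route as the paper: a one-step argument in which the threshold for $\chi_1$ that sends $x_1$ past $u_l$ (resp.\ below $v_l$) tends to $-1$ (resp.\ $+1$) by continuity of $F$ and the identities $F(u_l)=l$, $F(v_l)=-l$, after which the upper bound $\phi\le C$ converts this into a probability tending to $1$, and Theorem~\ref{lem:vul}(iv) (resp.\ (iii)) is invoked at time $1$. The paper merely phrases the same estimate via an explicit $\varepsilon$--$\delta$ uniform-continuity bookkeeping instead of your threshold function $\theta(x_0)$.
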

\begin{proof}
Let us prove that $P_p(x_0)\to 1$ as $x_0\uparrow u_l$. 
The other case can be treated similarly. 

By uniform continuity of $F$ on the interval $[0, H]$, for any $\varepsilon \in (0, 2C)$  we can find $\delta_1=\delta_1(\varepsilon)$ such that
\[
|F(x) - F(y)| \le \frac{l\varepsilon}{2C} \quad \text{for} \quad |x-y|<\delta_1, \, \, \forall x, y\in [0, H].
\]
Let
\[
\delta=\delta(\varepsilon)\le \min\left\{\delta_1(\varepsilon), \, \frac{l\varepsilon}{2C}\right\}
\]
and 
\[
\Omega^{(1)}_\varepsilon:=\left\{\omega\in \Omega: \chi_1(\omega)\ge  -1+\frac{\varepsilon}{C} \right\}.
\]
Note that since $\frac{\varepsilon}{C} <2$, the set $\Omega^{(1)}_\varepsilon$ is non-empty and 
\[
\mathbb P\left\{\Omega^{(1)}_\varepsilon \right\}=\int ^{1}_{-1+\varepsilon/C}\phi(s)ds=1-\int_{-1} ^{-1+\varepsilon/C}\phi(s)ds\ge 
1-\varepsilon.
\]
Let 
$0<u_l-x_0<\delta$, then 
\[
|l-F(x_0)|=|F(u_l)-F(x_0)|\le  \frac{l\varepsilon}{2C}, \quad \text{or} \quad F(x_0)\ge l- \frac{l\varepsilon}{2C},
\]
and, on $\Omega^{(1)}_\varepsilon$,  we have $x_1 \in (u_l,H)$, since
\[
x_1=x_0+F(x_0)+l\chi_1\ge x_0+l-  \frac{l\varepsilon}{2C}+l\left(-1+\frac{\varepsilon}{C}\right)
=x_0+ \frac{l\varepsilon}{2C}>u_l-\delta+ \frac{l\varepsilon}{2C}_l \geq u_l.
\]
This  implies 
\[
P_p(x_0)\ge \mathbb P\left\{\Omega^{(1)}_\varepsilon \right\}\ge 1-\varepsilon,\quad \text{whenever}\quad 
0<u_l-x_0<\delta, \]
which completes the proof.
\end{proof}

%%%%%%%%%%%%%%%%%%%
%%%%%%%%%%%%%%%%%

\subsection{ $F$ is increasing on $(b,a)$}

When $F$ is increasing on $(b,a)$, we can state the following corollary of Theorem \ref{thm:mixed}
and Lemma~\ref{lemma:mixed1}, since in \eqref{add} and \eqref{subtract} we have $A=F(x_0)=B$.

\begin{corollary}
\label{cor:Fmonot}
Let, in addition to assumptions of Theorem \ref{thm:mixed}, the  function $F$ be increasing on $[b, a]$.   Then, 
for each $l\in (0, -F(b))$, we have
\begin{enumerate}
\item [(i)] $u_l=F^{-1}(l)$, $v_l=F^{-1}(-l)$.
\item [(ii)] If $x_0\in (0, v_l)$ then there exists $n_1 \in {\mathbb N}$ such that $x_n \in [0,b]$ a.s. for $n \geq n_1$.
\item [(iii)] If $x_0\in (u_l, H)$ then $x$ persists a.s.; moreover, there exists $n_2 \in {\mathbb N}$ such that $x_n \geq a$
a.s. for $n \geq n_2$.
\item [(iv)] If $x_0\in (v_l, \, u_l)$ then $x$ persists with a positive probability $P_p(x_0)\ge p_1^{K_1}$ and eventually belongs to $(0, b)$ with a positive probability $P_e(x_0)\ge p_2^{K_2}$, where
\[
p_1=p_1(x_0)=\mathbb P \left\{\omega\in \Omega: \chi(\omega)>1-\frac{l+F(x_0)}{2l}  \right\}, \quad 
K_1=K_1(x_0):=\left[\frac{2(u_l -x_0)}{l+F(x_0)}\right]+1;
\]
and 
\[
p_2=p_2(x_0)=\mathbb P \left\{\omega\in \Omega: \chi(\omega)<-1+\frac{l-F(x_0)}{2l}  \right\}, \quad 
K_2=K_2(x_0):=\left[\frac{2( x_0-v_l)}{l-F(x_0)}\right]+1.
\]
\end{enumerate}
\end{corollary}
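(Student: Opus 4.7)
The plan is to reduce the corollary to Theorem~\ref{thm:mixed} and Lemma~\ref{lemma:mixed1} by exploiting the monotonicity of $F$ on $[b,a]$, which trivializes all of the optimizations appearing in the general setup.

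For part (i), since $F$ is continuous and strictly increasing on $[b,a]$ with $F(b)<-l$ (from \eqref{cond:lless}) and $F(a)>l$ (from \eqref{cond:mainl}), each of the equations $F(x)=l$ and $F(x)=-l$ has a unique solution in $(b,a)$. Combined with the identities $F(u_l)=l$ and $F(v_l)=-l$ established in Theorem~\ref{lem:vul}(ii), this yields $u_l=F^{-1}(l)$ and $v_l=F^{-1}(-l)$.

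Next I would verify condition \eqref{cond:Flbound1}. By monotonicity, the set $\{x\in(b,a):F(x)>l\}$ equals $(u_l,a)$, so $\beta_l=u_l$; likewise $\{x\in(b,a):F(x)<-l\}=(b,v_l)$, so $\alpha_l=v_l$. Hence $|F(x)|<l$ on $(v_l,u_l)$, which is precisely \eqref{cond:Flbound1}. Parts (ii) and (iii) of the corollary then follow verbatim from parts (i) and (ii) of Theorem~\ref{thm:mixed}.

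For part (iv) I invoke Lemma~\ref{lemma:mixed1}. The only computation needed is the simplification of the quantities $A$ and $B$ from \eqref{add} and \eqref{subtract}: because $F$ is increasing on $[b,a]$, for every $x_0\in(v_l,u_l)$ we have
\[
A=\min_{x\in[x_0,u_l]}F(x)=F(x_0),\qquad B=\max_{x\in[v_l,x_0]}F(x)=F(x_0).
\]
Substituting $A=F(x_0)$ into \eqref{def:prob1} and $B=F(x_0)$ into \eqref{def:prob2} reproduces exactly the expressions stated for $p_1,K_1,p_2,K_2$, and the probability bounds $P_p(x_0)\ge p_1^{K_1}$, $P_e(x_0)\ge p_2^{K_2}$ are then immediate from Lemma~\ref{lemma:mixed1}.

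There is no substantive obstacle: monotonicity collapses every optimization into an evaluation at $x_0$, and the corollary is a direct specialization. The only point that needs a careful check is that the strict inequalities $F(b)<-l$ and $F(a)>l$ place $u_l$ and $v_l$ strictly inside $(b,a)$, so that $F^{-1}$ is well defined at $\pm l$ and the interval $(v_l,u_l)$ is non-degenerate.
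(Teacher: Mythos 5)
Your proposal is correct and follows essentially the same route as the paper, which presents this as a direct corollary of Theorem~\ref{thm:mixed} and Lemma~\ref{lemma:mixed1} precisely because monotonicity gives $A=F(x_0)=B$ in \eqref{add} and \eqref{subtract}. Your additional checks (uniqueness of $F^{-1}(\pm l)$ for part (i) and verification of \eqref{cond:Flbound1}, i.e.\ $\alpha_l=v_l$, $\beta_l=u_l$, so that Theorem~\ref{thm:mixed} applies) are exactly the details the paper leaves implicit.
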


In the following Theorem we improve estimations of persistence and low-density behavior probabilities $P_p(x_0)$ and $P_e(x_0)$, when $x_0\in (v_l, \, u_l)$.
The estimates are based on evaluating at each step  
the new probability to move right $(F(x_0)+l)/2$ units (respectively, left $(l-F(x_0))/2$ units).
Let us introduce the following notation:
\begin{equation}
\label{def:epsmain}
\varepsilon := \frac{l+F(x_0)}{2}, ~~ K_1 := \left[ \frac{u_l-x_0}{\varepsilon} \right] +1,~~
\delta:= \frac{l-F(x_0)}{2}, \quad K_2 := \left[ \frac{x_0-v_l}{\delta} \right] +1,
\end{equation}
\begin{equation}
\label{def:epsils}
\varepsilon_0:= (F(x_0)+l)/(2l) \in (0,1), \quad \varepsilon_{i} := \frac{l+2F(x_0+ (i-1) \varepsilon)-F(x_0)}{2l},
\quad i=1, \dots, K_1,
\end{equation}
\begin{equation}
\label{def:mus}
\delta_0:= (l-F(x_0))/(2l) \in (0,1), \quad \delta_{i} := \frac{l-2F(x_0+ i \varepsilon)+ F(x_0)}{2l},
\quad i=1, \dots, K_2,
\end{equation}
\begin{equation}
\label{def:probab_1}
\lambda_i :=\mathbb P\{\omega \in \Omega: \chi(\omega) > 1-\varepsilon_i \}=
\int_{\max\{-1,1-\varepsilon_i\}}^1 \phi(t)dt,
\end{equation}
\begin{equation}
\label{def:probab_2}
\mu_i :=\mathbb P\{\omega \in \Omega: \chi(\omega) < -1 + \delta_i \}=
\int_{-1}^{\min\{-1+\delta_i,1\}} \phi(t)dt.
\end{equation}

\begin{theorem}
\label{lem:Fmonimpr}
Assume that Assumptions \ref{as:chibound} - \ref{as:bmain} hold,  $b$, $u_l$  and $v_l$ are  denoted in \eqref{def:b1},
\eqref{def:ul} and \eqref{def:vl}, respectively, and $l$ satisfies 
conditions \eqref{cond:mainl} and \eqref{cond:lless}.  
If the  function $F$ increases on $[v_l, u_l]$ then
a solution to \eqref{eq:main} with the initial value $x_0\in [0, H]$ persists with a positive probability 
\begin{equation}
\label{def:K1}
P_p(x_0)\ge \prod_{i=1}^{K_1} \lambda_i
\end{equation}
and eventually belongs to $(0, b)$ with a positive probability 
\begin{equation}
\label{def:K2}   
P_e(x_0)\ge \prod_{i=1}^{K_1} \mu_i,
\end{equation}   
where $K_1$ and $K_2$ are introduced in \eqref{def:epsmain}, while $\lambda_i$ and $\mu_i$ are denoted in 
\eqref{def:probab_1} and \eqref{def:probab_2}, respectively.
\end{theorem}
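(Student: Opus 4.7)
The strategy is to replicate the iterative argument of Lemma~\ref{lemma:mixed1}, but to refine the per-step probabilistic threshold using the monotonicity of $F$ on $[v_l,u_l]$. In that lemma one uses the globally worst bound $A=\min_{[x_0,u_l]}F(x)=F(x_0)$ for every step; this wastes information, because once the trajectory has moved rightward by $\varepsilon$ units, $F$ is strictly larger at the new position, and a less extreme value of $\chi$ suffices to advance the next step. Replacing the uniform threshold by position-dependent thresholds produces the product $\prod_i\lambda_i$ in place of the uniform $p_1^{K_1}$.

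For part \eqref{def:K1}, fix $x_0\in(v_l,u_l)$ and let $\varepsilon=(l+F(x_0))/2>0$. Set $\Omega_i:=\{\omega:\chi_i(\omega)>1-\varepsilon_i\}$, and argue by induction on $i$ that, on $\bigcap_{j\le i}\Omega_j$, either $x_m>u_l$ for some $m\le i$, or $x_i\ge x_0+i\varepsilon$. For the inductive step, if $x_{i-1}\in[x_0+(i-1)\varepsilon,u_l]$ then monotonicity of $F$ gives $F(x_{i-1})\ge F(x_0+(i-1)\varepsilon)$; a direct computation shows that $1-\varepsilon_i$ equals exactly the threshold $(\varepsilon-F(x_0+(i-1)\varepsilon))/l$, so on $\Omega_i$
\[
x_i \;=\; x_{i-1}+F(x_{i-1})+l\chi_i \;\ge\; x_{i-1}+\varepsilon.
\]
Since $x_0+K_1\varepsilon>u_l$, the induction forces $x_m>u_l$ for some $m\le K_1$, and independence of the $\chi_i$ yields $\mathbb P(\bigcap_{i=1}^{K_1}\Omega_i)=\prod_{i=1}^{K_1}\lambda_i$. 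Theorem~\ref{lem:vul}(iv) then gives a.s.\ persistence after time $m$.

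Part \eqref{def:K2} is perfectly symmetric: with leftward step $\delta=(l-F(x_0))/2>0$ and events $\{\chi_i<-1+\delta_i\}$, monotonicity of $F$ gives the opposite inequality $F(x_{i-1})\le F(x_0-(i-1)\delta)$ at the $i$-th step, and after at most $K_2$ steps the trajectory drops below $v_l$, at which point Theorem~\ref{lem:vul}(iii) finishes the argument. The main obstacle is not conceptual but bookkeeping: verifying that the expressions for $\varepsilon_i$ and $\delta_i$ in \eqref{def:epsils}--\eqref{def:mus} are exactly the thresholds produced by the monotonicity-based bound on $F$ along the predicted trajectory, and handling the ``early exit'' alternative (where $x_i$ already overshoots $u_l$, respectively drops below $v_l$, before step $K_1$ or $K_2$) so that this does not invalidate the probabilistic lower bounds $\prod_i\lambda_i$ and $\prod_i\mu_i$.
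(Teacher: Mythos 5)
Your proposal is correct and follows essentially the same route as the paper: the same events $\Omega_i=\{\chi_i>1-\varepsilon_i\}$, the same induction showing that on $\bigcap_{j\le i}\Omega_j$ either the trajectory has already exceeded $u_l$ or $x_i\ge x_0+i\varepsilon$ (using monotonicity of $F$ to replace $F(x_{i-1})$ by $F(x_0+(i-1)\varepsilon)$, with $1-\varepsilon_i=(\varepsilon-F(x_0+(i-1)\varepsilon))/l$ exactly as you computed), followed by an appeal to Theorem~\ref{lem:vul} and the symmetric leftward argument for $P_e$. No substantive differences from the paper's proof.
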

  
\begin{proof}
Denote
$
\Omega_i:=\{\omega\in \Omega: \chi_i(\omega) > 1-\varepsilon_i \}, 
$
then
${\mathbb P}\{ \Omega_i \} =\lambda_i$. 
%${\mathbb P}\{ \omega \in \Omega_i \} =\lambda_i$.
On $\Omega_1$, we have
\begin{equation}
\label{est:omega1}
x_1=x_0+F(x_0)+l\chi_1 \geq x_0+F(x_0)+l-\frac{l+F(x_0)}{2}=x_0+\frac{l+F(x_0)}{2}, \quad \text{or} \quad x_1-x_0 \geq \varepsilon.
\end{equation}
Further, assume that on $\displaystyle \cap_{j=1}^i\Omega_j $ we have $x_i \geq x_0+ i \varepsilon$. 
Then on $\displaystyle \cap_{j=1}^{i}\Omega_j $,  either  
$x_i \ge  u_l$ or  $x_i < u_l$. In the former case, by Theorem~\ref{lem:vul},  $x$ persists and 
\[
{\mathbb P} \left\{ x_{K_1} \geq a \right\}={\mathbb P} \left\{ x_{i} \geq u_l \right\} \geq {\mathbb P} \left\{ \cap_{j=1}^{i}\Omega_j    
\right\} = \prod_{i=1}^{i} \lambda_j \ge \prod_{i=1}^{K_1} \lambda_j.\]
In the latter case,  due to monotonicity of $F$, 
we have
\begin{equation*}
\begin{split}
x_{i+1}=x_i+F(x_i)+l\chi_{i+1} > & x_i+F(x_0+i \varepsilon)+ l- \frac{l+2F(x_0+ i \varepsilon)-F(x_0)}{2}
\\ = &x_i+\frac{l+F(x_0)}{2}=x_i+\varepsilon> x_0+(i+1) \varepsilon.
\end{split}
\end{equation*}
By induction, either $x_i\ge u_l$ for some $i=1, \dots, K_1$ or $x_i \geq x_0+ i \varepsilon$, for all $i=1, \dots, K_1$, and hence on 
$\cap_{j=1}^{K_1}\Omega_j $,
$x_{K_1} \geq u_l.$

To conclude the estimate for $P_p(x_0)$, by Theorem~\ref{lem:vul}, part (iv), for a given $x_0$, we have 
$$
{\mathbb P} \left\{ x_{K_1} \geq a \right\} ={\mathbb P} \left\{ x_{K_1} \geq u_l \right\} \geq {\mathbb P} \left\{ \cap_{j=1}^{K_1}\Omega_j  
\right\}
= \prod_{i=1}^{K_1} \lambda_i.
$$
The estimate for $P_e$ is justified similarly. 
\end{proof}

Both estimates for probabilities  $P_p(x_0)$ and  $P_e(x_0)$ in Corollary \ref{cor:Fmonot}  
can be writen in a more explicit form in the case when the density $\phi$ is bounded below by the constant $h>0$, 
function $F$ is   differentiable on $[b, a]$ and its derivative is bounded from below.

\begin{corollary}
\label{rem:unit}
\begin{enumerate}
\item [(i)] If for some $h>0$ and all $x\in [-1,1]$
\begin{equation}
\label{cond:hphi}
\phi(x)\ge h,
\end{equation}
%Let 
%begin{equation}
%label{def:tildeK1}
%tilde{K}_1=\min\{i: \varepsilon_i > 2\}, \quad \bar K_1=\min\{\tilde{K}_1-1, K_1\}, \quad
%end{equation}
%nd 
%begin{equation}
%label{def:tildeK2}
%\tilde{K}_2=\min\{i: \delta_i>2\}, \quad \bar K_2=\min\{\tilde{K}_2-1, K_2\}.
%\end{equation} 
then the estimates \eqref{def:K1} and  \eqref{def:K2} lead to the inequalities
\[
P_p(x_0)\ge h^{K_1}\prod_{i=1}^{K_1} \varepsilon_i, \quad P_e(x_0)\ge h^{K_2}\prod_{i=1}^{K_2} \delta_i.
\]

\item [(ii) ]
Let, in addition to \eqref{cond:hphi}, for some $\kappa>0$ and all $x, y\in [v_l, u_l]$,
\begin{equation}
\label{cond:kF}
|F(y)-F(x)|\ge \kappa |x-y|.
\end{equation}
Then  estimates \eqref{def:K1} and  \eqref{def:K2} imply
\[
P_p(x_0)\ge h^{K_1}\prod_{i=1}^{K_1} \left(\varepsilon_0+\frac {\kappa (i-1) \varepsilon}l\right), \quad P_e(x_0)\ge h^{K_2}\prod_{i=1}^{K_2} \left(\delta_0+\frac {\kappa (i-1) \delta}l\right),
\]
and substitution of values from \eqref{def:epsmain}-\eqref{def:mus} implies
\[
P_p(x_0)\ge h^{K_1}\left(\frac{\varepsilon}l\right)^{K_1}\prod_{i=1}^{K_1} \left(1+\kappa (i-1)\right), \quad 
P_e(x_0)\ge h^{K_2}\left(\frac{\delta}l\right)^{K_2}\prod_{i=1}^{K_2} \left(1+\kappa (i-1)\right).
\]
\item[(iii)] If, in addition to conditions of (ii),  $\chi$ are uniformly distributed, then $h=1/2$ and 
estimates \eqref{def:K1} and  \eqref{def:K2} take forms
\[
P_p(x_0)\ge \left(\frac{\varepsilon}{2l}\right)^{K_1}\prod_{i=1}^{K_1} \left(1+\kappa (i-1)\right), 
\quad P_e(x_0)\ge \left(\frac{\delta}{2l}\right)^{K_2}\prod_{i=1}^{K_2} \left(1+\kappa (i-1)\right).
\]
\end{enumerate}
\end{corollary}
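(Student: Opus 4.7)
The plan is to derive (i)--(iii) as successive specializations of the product lower bounds $P_p(x_0) \geq \prod_{i=1}^{K_1} \lambda_i$ and $P_e(x_0) \geq \prod_{i=1}^{K_2}\mu_i$ already supplied by Theorem~\ref{lem:Fmonimpr}; at each stage the factors $\lambda_i, \mu_i$, and then $\varepsilon_i, \delta_i$, are replaced by more explicit expressions. The arguments for the $P_p$ and $P_e$ bounds run in perfect parallel, so I would treat them side by side and only spell out one in detail.

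For part (i), I work directly from the defining integrals \eqref{def:probab_1} and \eqref{def:probab_2}. The key auxiliary observation is that $\varepsilon_i \leq 2$ and $\delta_i \leq 2$, so that the clipped limits $\max\{-1, 1-\varepsilon_i\}$ and $\min\{-1+\delta_i, 1\}$ reduce to the clean values $1-\varepsilon_i$ and $-1+\delta_i$. This is checked from \eqref{def:epsils}--\eqref{def:mus} using the fact that as long as the relevant iterate still lies in $[v_l, u_l]$, so does the argument of $F$, and then monotonicity together with $F(v_l) = -l$ and $F(u_l) = l$ gives $|F| \leq l$ at both arguments appearing in the numerators. Once this simplification is in place, the pointwise bound \eqref{cond:hphi} yields $\lambda_i \geq h\varepsilon_i$ and $\mu_i \geq h\delta_i$, and taking products closes part (i).

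For part (ii), I would first rewrite $\varepsilon_i$ from \eqref{def:epsils} by splitting its numerator as
$\varepsilon_i = \varepsilon_0 + \frac{F(x_0 + (i-1)\varepsilon) - F(x_0)}{l},$
with the analogous identity for $\delta_i$. Condition \eqref{cond:kF} together with $F$ increasing on $[v_l, u_l]$ then gives $F(x_0 + (i-1)\varepsilon) - F(x_0) \geq \kappa (i-1)\varepsilon$, hence $\varepsilon_i \geq \varepsilon_0 + \kappa(i-1)\varepsilon/l$, and symmetrically for $\delta_i$. Substituting these into the bounds from (i) yields the first pair of inequalities in (ii). The second pair is obtained by noting $\varepsilon_0 = \varepsilon/l$ and $\delta_0 = \delta/l$, which are immediate comparisons of \eqref{def:epsmain} with \eqref{def:epsils}--\eqref{def:mus}, and then factoring $\varepsilon/l$ (resp.\ $\delta/l$) out of every term in the product. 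Part (iii) follows at once by plugging $h = 1/2$, the value of the uniform density on $[-1,1]$, into (ii).

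The only step that calls for genuine care is the verification $\varepsilon_i, \delta_i \leq 2$ in part (i): without it the clean bound $h\varepsilon_i$ would have to degrade to $h\min\{\varepsilon_i, 2\}$ and the product form stated in the corollary would be lost. Once this is dispatched using the $[v_l, u_l]$-confinement of the iterates and the endpoint values of $F$, the remainder of the argument is routine algebraic bookkeeping and direct substitution.
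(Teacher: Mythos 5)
Your proposal is correct and follows essentially the same route as the paper: the published proof consists precisely of the numerator split $\varepsilon_i=\varepsilon_0+\bigl(F(x_0+(i-1)\varepsilon)-F(x_0)\bigr)/l$ combined with \eqref{cond:kF} and the observation $\varepsilon_0=\varepsilon/l$, treating part (i) and the $\delta_i$ case as immediate. Your additional verification that $\varepsilon_i,\delta_i\le 2$ (so the clipped integration limits in \eqref{def:probab_1}--\eqref{def:probab_2} are inactive and $\lambda_i\ge h\varepsilon_i$, $\mu_i\ge h\delta_i$ hold cleanly) is a detail the paper leaves implicit, and it is a worthwhile one to record.
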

\begin{proof}
We only have to prove the estimates of $\varepsilon_i$ in (ii)
\begin{equation*}
\begin{split} \varepsilon_i= &\frac{l+2F(x_0+ (i-1) \varepsilon)-F(x_0)}{2l}=\\&\frac{l+F(x_0)}{2l}+\frac{F(x_0+ (i-1) 
\varepsilon)-F(x_0)}{l}\ge\varepsilon_0+\frac {\kappa (i-1) \varepsilon}l,
\end{split}
\end{equation*}
%which immediately implies the estimate in (ii). 
and note that $\varepsilon_0=\frac{\varepsilon}{l}$. The estimates are valid since 
$x_0+(i-1)\varepsilon\le u_l$.
%The estimates are valid since $x_0+(i-1) \varepsilon \leq u_l$
%and $\varepsilon_0 = \frac{\varepsilon}{l}$.
\end{proof}

%%%%%%%%%%%%%%%%

\section{Multistability}
\label{sec:multi}

So far we have considered only one bounded open subinterval $(a,H) \subset (0,\infty)$, which $f$ mapped into 
$(a+l,H-l)$. However, there may be several non-intersecting subintervals with this property.

\begin{assumption}
\label{as:faHmulti}
Assume that $f:[0,\infty)\to [0, \infty)$ is continuous, $f(0)=0$, $f(x)>0$ for $x>0$ and 
there exist positive numbers $a_i$ and  $H_i$, $a_i<H_i$, $i=1, \dots, k$ and $H_i<a_{i+1}$, 
$i=1, \dots, k-1$, such that
\begin{enumerate}
\item [(i)] $ f_i := \max_{x\in (0,H_i)}f(x) < H_i$, $i=1, \dots,k$;
\item[(ii)]  $f(x)>f(a_i)>a_i, \quad x\in (a_i, H_i]$, $i=1, \dots,k$.
\end{enumerate}
\end{assumption}

\begin{lemma}
\label{lem:add2multi}
Let Assumptions \ref{as:chibound} and \ref{as:faHmulti}  hold,
$(x_n)$ be a solution of equation \eqref{eq:main} with $l$ satisfying, 
for some particular $i \in \{1, 2, \dots, k\}$, 
\begin{equation}
\label{cond:mainlmulti}
l < \min\{H_i-f_i, \,\, f(a_i)-a_i\}.
\end{equation}
If $x_0 \in (a_i,H_i)$ then $x_n\in (a_i,H_i)$.

If in addition 
\begin{equation}
\label{cond:lbmulti}
l>\max_{x \in [0,a_1]} (-F(x))
\end{equation}
then, for an arbitrary  initial value $x_0\in (0, H_1)$, 
a.s.,  $x_n$ eventually gets into  the interval $(a_1, H_1)$ and stays there.
\end{lemma}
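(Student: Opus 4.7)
The plan is to observe that this lemma splits naturally into two assertions, each of which essentially reuses an earlier argument specialized to the $i$-th interval $(a_i,H_i)$. The first claim is a direct analog of Lemma~\ref{lem:main0} applied with $(a,H)=(a_i,H_i)$, while the second combines that invariance argument with the ascent mechanism from Theorem~\ref{lem:add2}.

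For the invariance claim I would argue by induction exactly as in Lemma~\ref{lem:main0}. Assuming $x_n\in (a_i,H_i]$, Assumption~\ref{as:faHmulti}(i) gives $f(x_n)\le f_i$, whence
\[
x_{n+1}=f(x_n)+l\chi_{n+1}\le f_i+l<H_i
\]
by \eqref{cond:mainlmulti} and $|\chi_{n+1}|\le 1$; and Assumption~\ref{as:faHmulti}(ii) gives $f(x_n)>f(a_i)$, whence
\[
x_{n+1}\ge f(a_i)-l = a_i+\bigl(f(a_i)-a_i\bigr)-l>a_i,
\]
again by \eqref{cond:mainlmulti}. Hence $x_{n+1}\in (a_i,H_i]$, closing the induction.

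For the second claim, the invariance result applied with $i=1$ reduces the task to showing that, starting from any $x_0\in (0,a_1]$, the solution reaches $(a_1,H_1)$ a.s.\ in finite time. Writing $M:=\max_{x\in[0,a_1]}(-F(x))$, hypothesis \eqref{cond:lbmulti} gives $l>M$, so one may pick $\eta\in (0,\,l-M)$ and set
\[
p_1:=\mathbb{P}\left\{\chi>\frac{\eta+M}{l}\right\},
\]
which is strictly positive by Assumption~\ref{as:chibound}. Whenever $x_n\in (0,a_1]$ and $\chi_{n+1}$ exceeds this threshold, a direct calculation in the spirit of Theorem~\ref{lem:add2} yields $x_{n+1}\ge x_n+\eta$. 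Setting $K:=\lfloor a_1/\eta\rfloor+1$, any block of $K$ consecutive favorable noise samples therefore pushes $x$ above $a_1$; and at every such step $x_n\in [0,a_1]$ implies $x_{n+1}\le f_1+l<H_1$, so the ascent cannot overshoot past $H_1$ into a neighboring attracting basin.

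The remaining step is the block-wise independence argument from the end of Theorem~\ref{lem:add2}, which applies verbatim: partition $\mathbb{N}$ into disjoint blocks of length $K$; the favorable event in each block has probability $p_1^K>0$, these events are independent, so the probability that no block ever delivers the required ascent is at most $(1-p_1^K)^m\to 0$. Hence $x$ enters $(a_1,H_1)$ a.s., and part one then keeps it there. The main place where the multistable setting requires extra care compared to the single-interval case of Theorem~\ref{lem:add2} is the non-overshoot step just described: one must verify that while climbing out of $(0,a_1]$ the iterates never jump over $H_1$ into $(a_2,H_2)$ or beyond, but this is automatic from the bound $l<H_1-f_1$ furnished by \eqref{cond:mainlmulti} with $i=1$.
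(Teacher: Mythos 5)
Your proof is correct and follows essentially the same route as the paper: the invariance of $(a_i,H_i)$ is the same one-step induction using \eqref{cond:mainlmulti}, and for the second claim the paper simply invokes Theorem~\ref{lem:add2} with $a=a_1$, $H=H_1$ (noting that \eqref{cond:lbmulti} is exactly $l>-F(b)$), whereas you unpack that citation and re-run its $\eta$-ascent and block-independence argument explicitly. The extra ``non-overshoot'' check you add is harmless but already subsumed in the cited theorem via $l<H_1-f_1$.
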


\begin{proof}
Let $x_0 \in (a_i,H_i)$, then by \eqref{cond:mainlmulti}
$$
x_1 = f(x_0) + l \chi_1(\omega) <H_i-l+l=H_i
$$
and
$$
x_1 = f(x_0) + l \chi_1(\omega) > a_i+l-l=a_i.$$
Similarly, $x_n \in (a_i,H_i)$ implies $x_{n+1} \in (a_i,H_i)$, the induction step concludes the proof of the first part.

If,  in addition,  \eqref{cond:lbmulti} holds and $x_0 \in [0,a_1)$ then the result follows from Theorem~\ref{lem:add2},
where we assume $a_1=a$, $H_1=H$. Then all the conditions of Lemma~\ref{lem:add2} are satisfied, and, a.s., 
$x_n$ eventually gets into  the interval $(a_1, H_1)$ and stays there, which completes the proof.
\end{proof}

\begin{example}
\label{ex:demo3}
Consider \eqref{eq:main} with $f(x)=x - \sin  x$.
There is the Allee effect on $[0,\pi]$. The function $f(x)$ is monotone increasing, satisfies $f(x)<x$ on
$(2\pi k, (2k+1)\pi k)$, $k =0,1,\dots$ and $f(x)>x$ for $x \in ((2k-1)\pi, 2\pi k)$, $k \in {\mathbb N}$.
Each of the intervals $(\pi k,\pi(k+1))$ is mapped onto itself. For example, we can choose 
$$
a_k \in \left((2k-1)\pi,\left(2k-\frac{3}{4}\right)\pi\right),\quad H_k \in \left(\left(2k+\frac{3}{4}\right)\pi,(2k+1)\pi\right), \quad k \in {\mathbb N}.
$$
By Lemma~\ref{lem:add2multi}, for appropriate $l$, once $x_0\in (a_k,H_k)$, we have $x_0\in (a_k,H_k)$, $k \in {\mathbb N}$.

If $l=0$ (the deterministic case) and $x_0 \in ((2 k-1)\pi,(2k+1)\pi)$ then
$x_n \to 2\pi k$ as $n \to \infty$. 
\end{example}

\begin{example}
\label{ex:demo4}
Consider \eqref{eq:main} with the function $f(x)=x-\sin x +0.5x \sin x$, which experiences the Allee effect and 
multistability.
However, $F(x)=(0.5 x-1)\sin x$ is unbounded, and it is hardly possible to find disjoint intervals $(a_i,H_i)$ mapped into themselves
such that
$$ \min_{x \in [a_i,H_i]} f(x) > H_{i-1}, \quad \max_{x \in [a_i,H_i]} f(x) < a_{i+1},\quad i \in {\mathbb N}.
$$
\end{example}

\section{Numerical Examples}
\label{sec6}

%\begin{remark}
%label{rem:ex12}
The equations in Examples~\ref{ex:pers1} and \ref{ex:pers2} satisfy Assumptions \ref{as:chibound}, \ref{as:faH} and
\ref{as:bmain}.
%\end{remark}
%\begin{remark}

%\label{rem:add2}
As model examples, we can consider \eqref{eq:boukal1} and \eqref{eq:boukal2}.
%$$x_{n+1}= \frac{Ax_n^2}{B+x_n} e^{r(1-x_n)}, ~~\mbox{for example, ~~} x_{n+1}= \frac{4}{2+x} e^{r(1-x)},
%$$
%or
%$$x_{n+1}= \frac{Ax_n}{B+(x_n-T)^2}, ~~\mbox{for example, ~~} x_{n+1}=\frac{4x_n}{2+(x_n-3)^2}
%$$
%from \cite{boukal}.
%\end{remark}  

\begin{example}
\label{ex:pers1}
Consider \eqref{eq:main} with
\begin{equation}
\label{def:4f}
f(x):=\frac{4x}{2+(x-3)^2}, \quad x>0.
\end{equation}
The fixed points of $f$ in \eqref{def:4f} are $c=3-\sqrt{2} \approx 1.586$ and $d=3+\sqrt{2} \approx 4.414$.
The maximum $f_m \approx 6.317$ is attained at $x_m=\sqrt{11} \approx 3.317$. Also, $f(f_m) \approx 1.943$
and the value of $$d_1= \{ x> d: f(x)=c \} \mbox{   ~is ~ } d_1=\frac{11}{3-\sqrt{2}} \approx 6.937.$$
Let us choose $a=1.8$,  $H=6.5$, then $f(a)\approx 2.093$, $f(H) \approx 1.825$, then $F(a)= 0.293$,
$F(H) \approx -4,675$. We consider $l=0.2<0.293$, for illustration of \eqref{def:4f} see Fig.~\ref{figure1}.

Furthermore, $b \approx 0.907$, and $F(b) \approx -0.3384$. 
For any $l<0.293$, there is a domain $(0,v_l)$, starting with which we have low density behavior,
and $(u_l,H)$ which eventually leads a.s. to $(a,H)$.
Let us take $l=0.2$, then $u_l \approx  1.74$, $v_l \approx 0.361$.

%%%plot [0:8] 4*x/(2+(x-3)*(x-3)),x, 1.586, 4.414, 6.317
%%%set term postscript portrait
%%%set size 0.9,0.6
%%%set output ""
%%%replot
%%%plot [0:12] 6*x/(2+(x-3)*(x-3)),x,x-0.2,x+0.2,1,5,9.47

\begin{figure}[ht]
\centering
\includegraphics[height=.25\textheight]{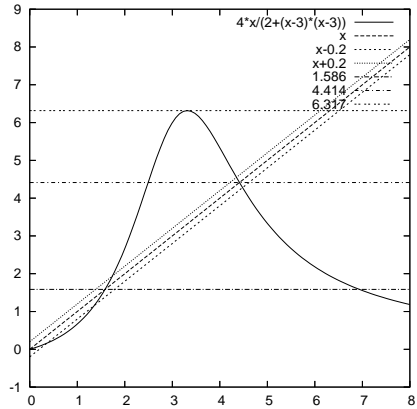}
%\hspace{10mm}  
%\includegraphics[height=.2\textheight]{plot_example1a.ps}
\caption{The graph of the function in (\protect{\ref{def:4f}});
the fixed points are $c \approx 1.586$ and $d\approx 4.414$, 
the maximum $\approx 6.317$ is attained at $ \approx 3.317$.}
\label{figure1}
\end{figure}

%%Illustration (3 figures: low density, bistability, persistence).

For \eqref{eq:main} with $f$ as in \eqref{def:4f},  $l=0.2$ and $x_0 \in [0,v_l)=[0,0.36)$ we have low density behavior 
(Fig.~\ref{figure2}, left), for $x_0 \in (u_l,H]=(1.74,6.5]$ we have persistence (Fig.~\ref{figure2}, right). If $x_0 \in 
(v_l,u_l) \approx (0.361,1.74)$, then solutions can either sustain or have eventually low density 
(Fig.~\ref{figure2}, middle). 
All numerical runs correspond to the case when $\chi$ has a uniform distribution on $[-1,1]$.

\begin{figure}[ht]
\centering     
\includegraphics[height=.15\textheight]{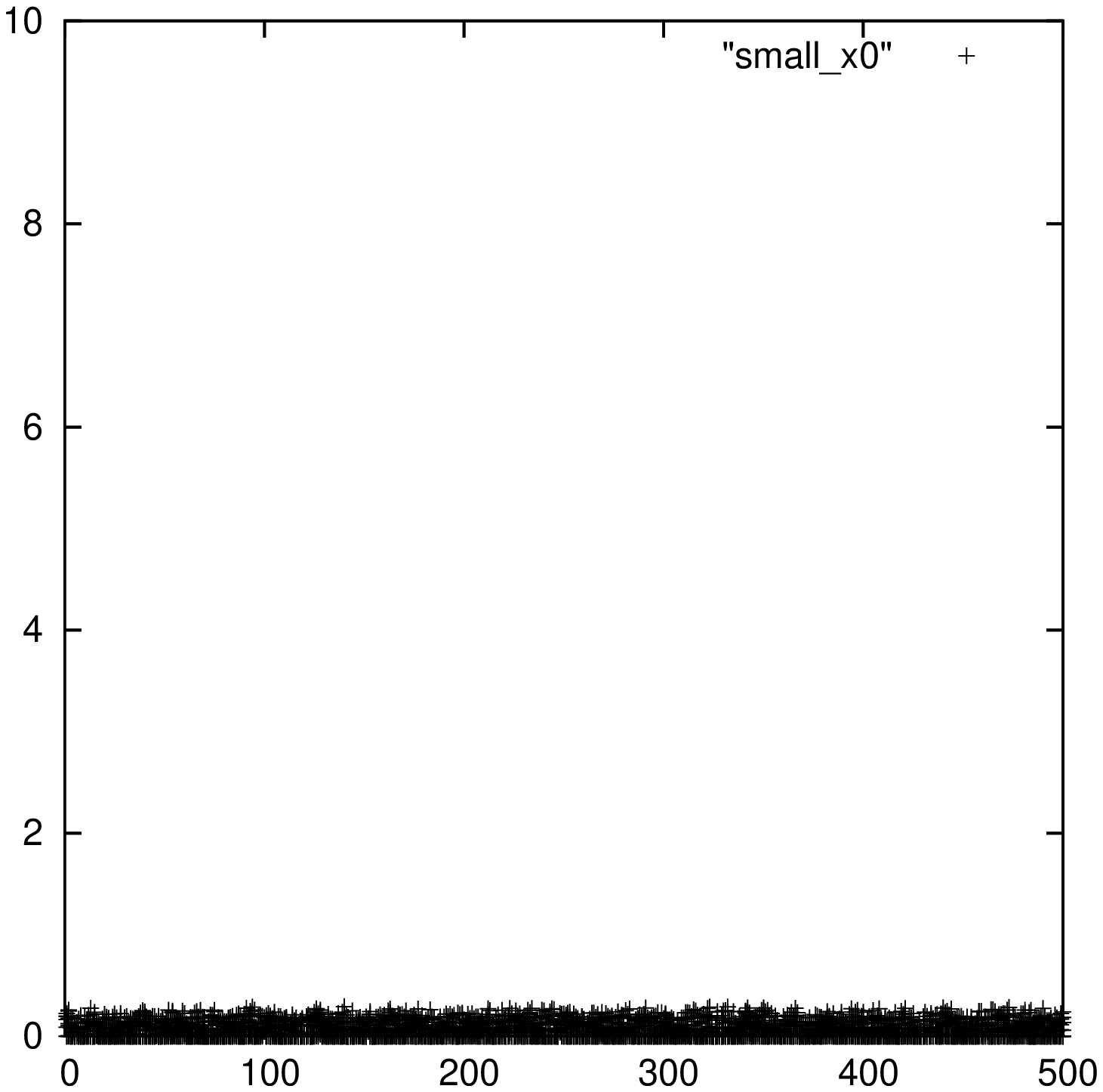}  
\hspace{5mm}
\includegraphics[height=.15\textheight]{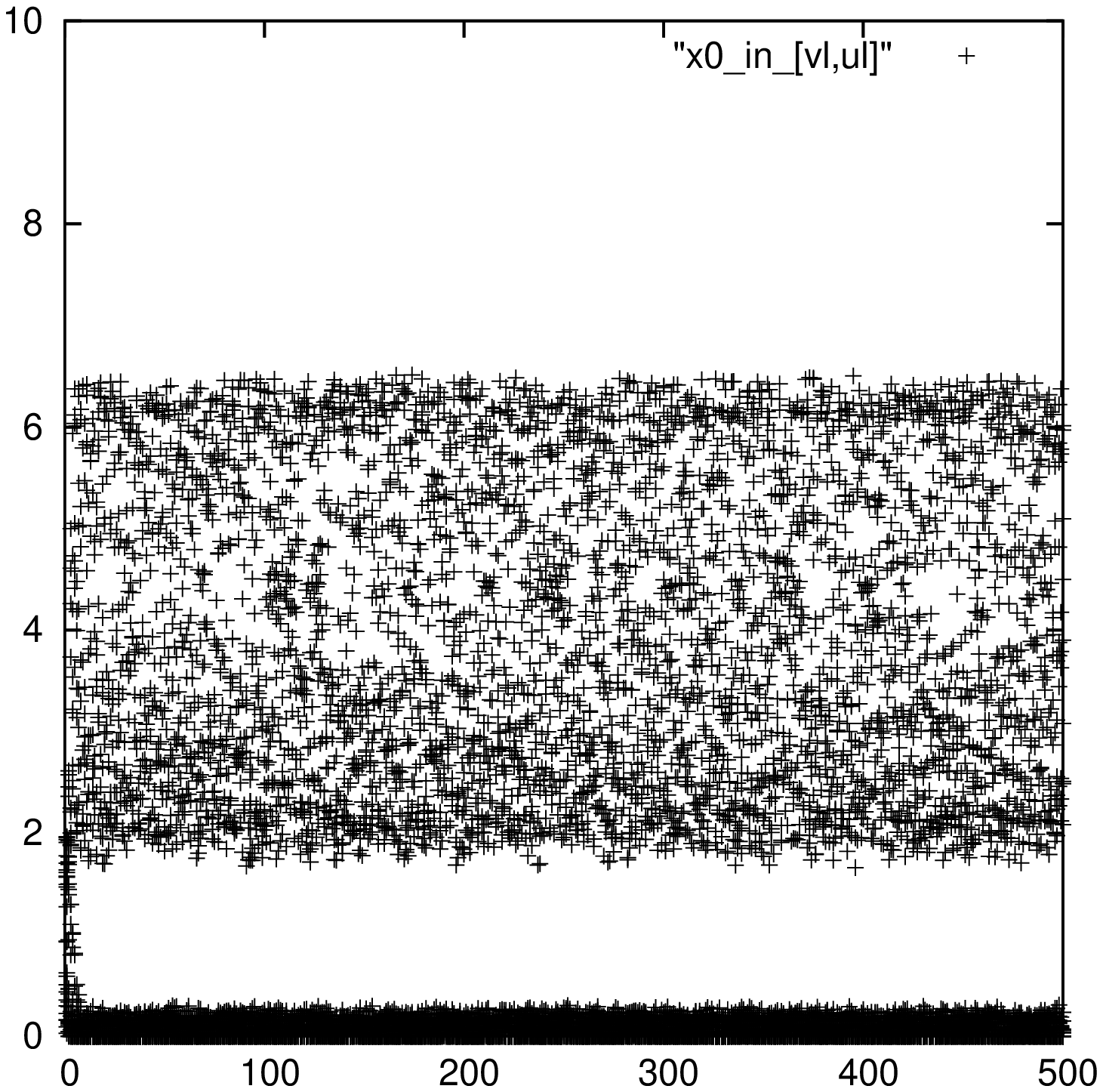}
\hspace{5mm}
\includegraphics[height=.15\textheight]{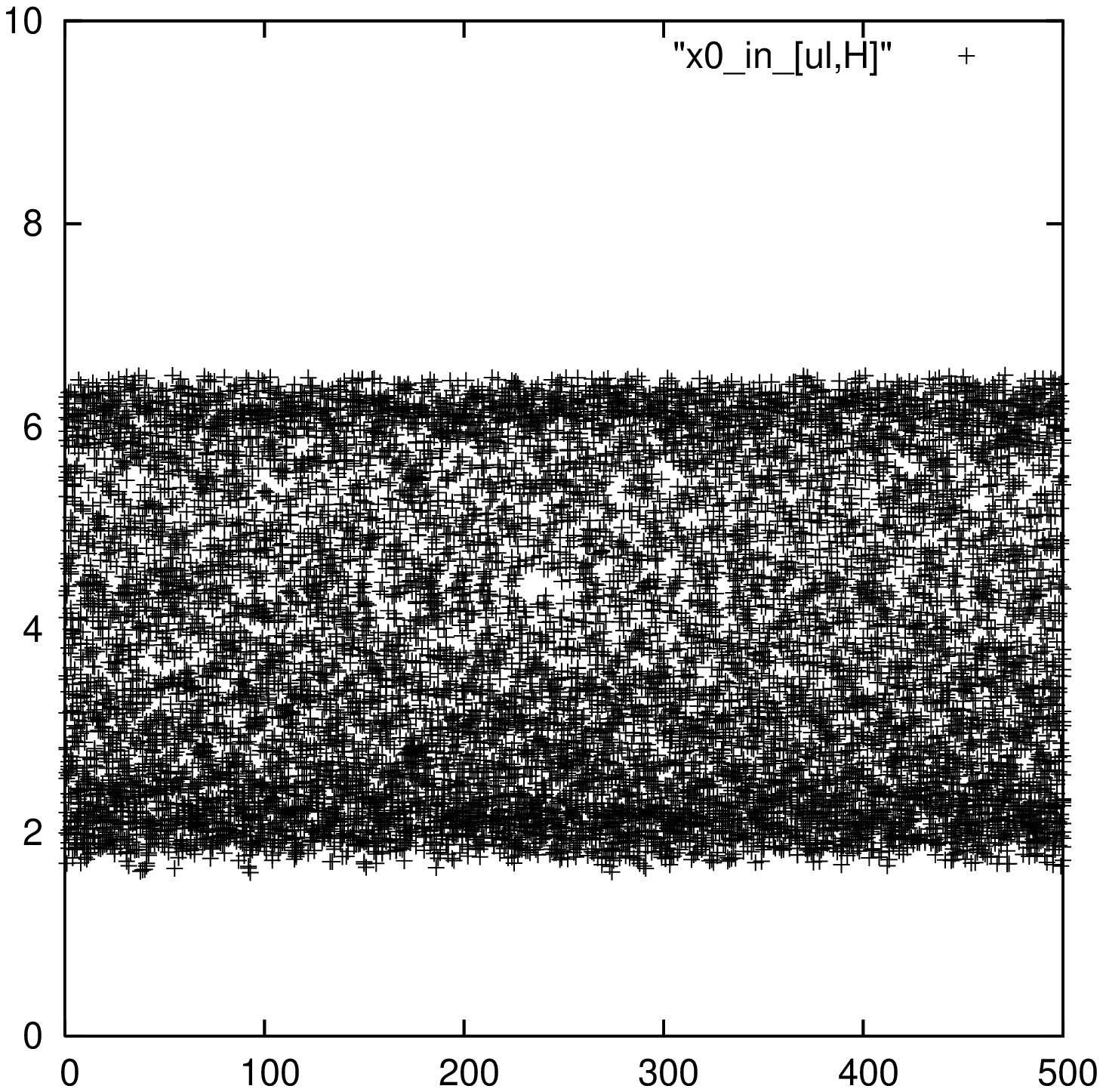}
\caption{Several runs of (\protect{\ref{eq:main}}) with $f$ as in (\protect{\ref{def:4f}}) 
for $x_0 \in [0,0.36)$ (left), $x_0\in (0.361,1.74)$ (middle) and $x_0\in 
(1.74,6.5]$ (right) for $l=0.2$.}
\label{figure2}
\end{figure}

Let us illustrate the dependency of the probability of the solution to sustain on the initial point $x_0 \in
(u_l,v_l)$. Fig.~\ref{figure3} presents 10 random runs starting with $x_0=1.4,1.5,1.6,1.7$ (Fig.~\ref{figure3}, from left
to right).

\begin{figure}[ht]
\centering
\includegraphics[height=.12\textheight]{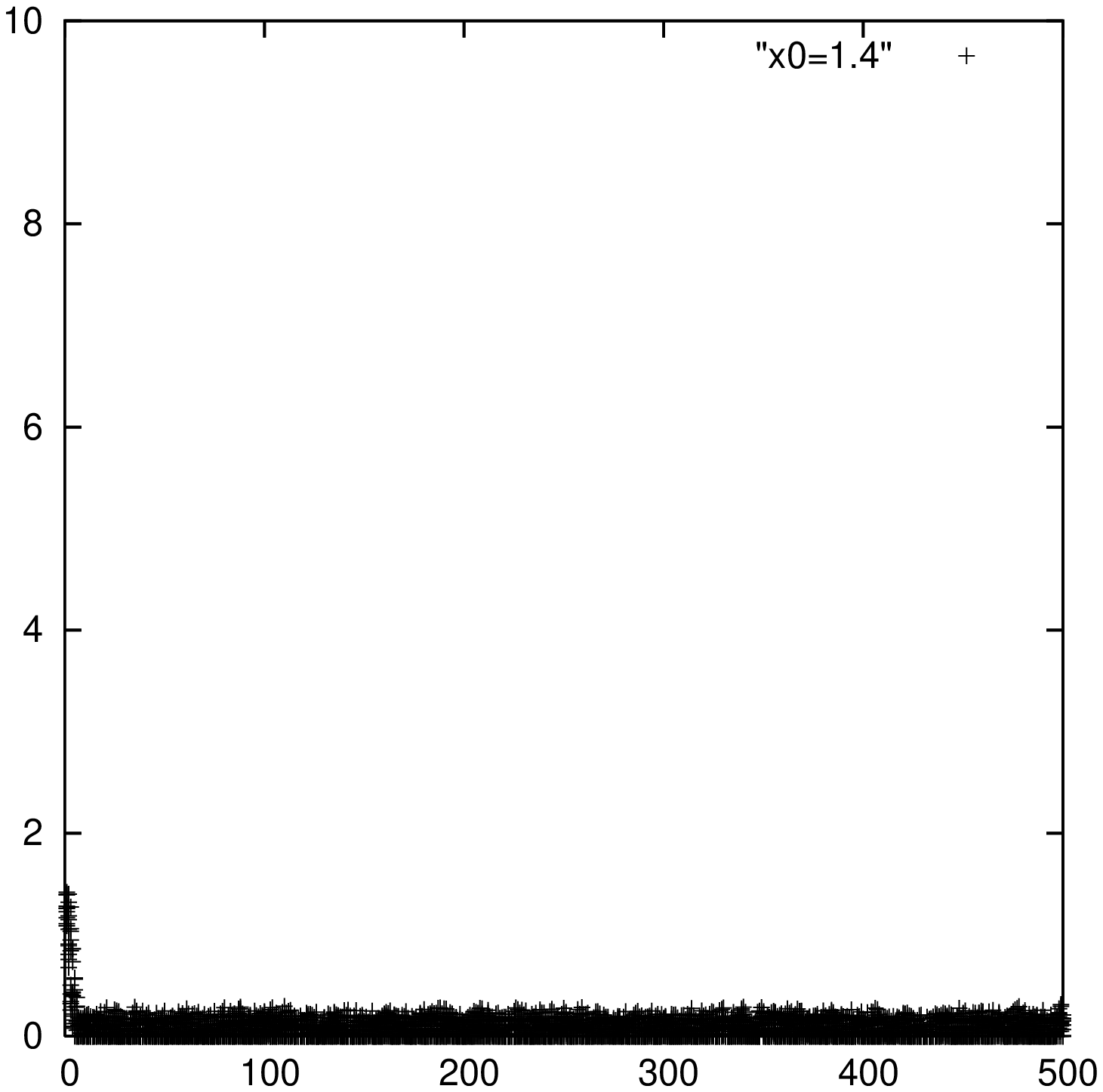}
\hspace{3mm}
\includegraphics[height=.12\textheight]{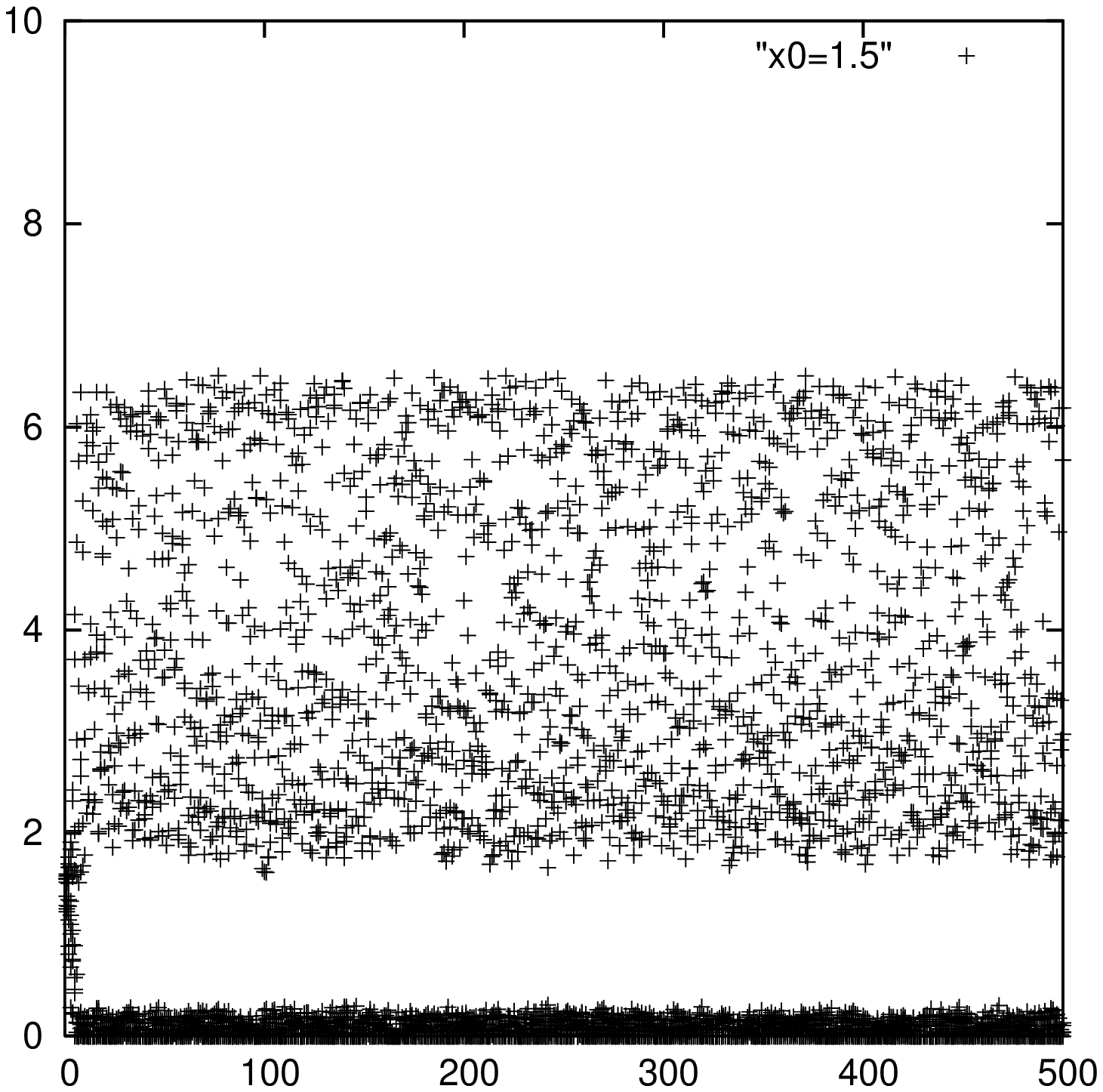}
\hspace{3mm}
\includegraphics[height=.12\textheight]{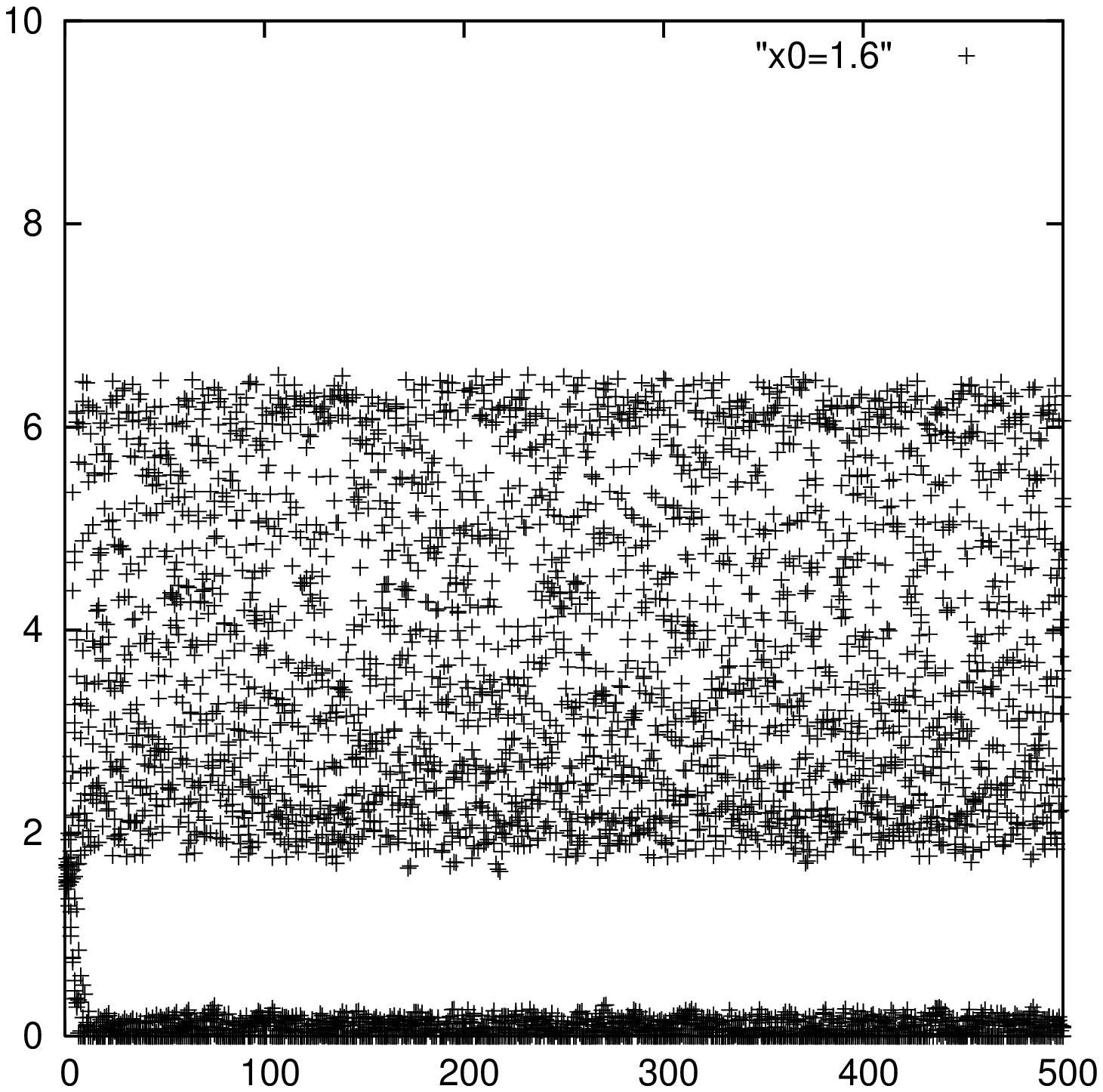}
\hspace{3mm}
\includegraphics[height=.12\textheight]{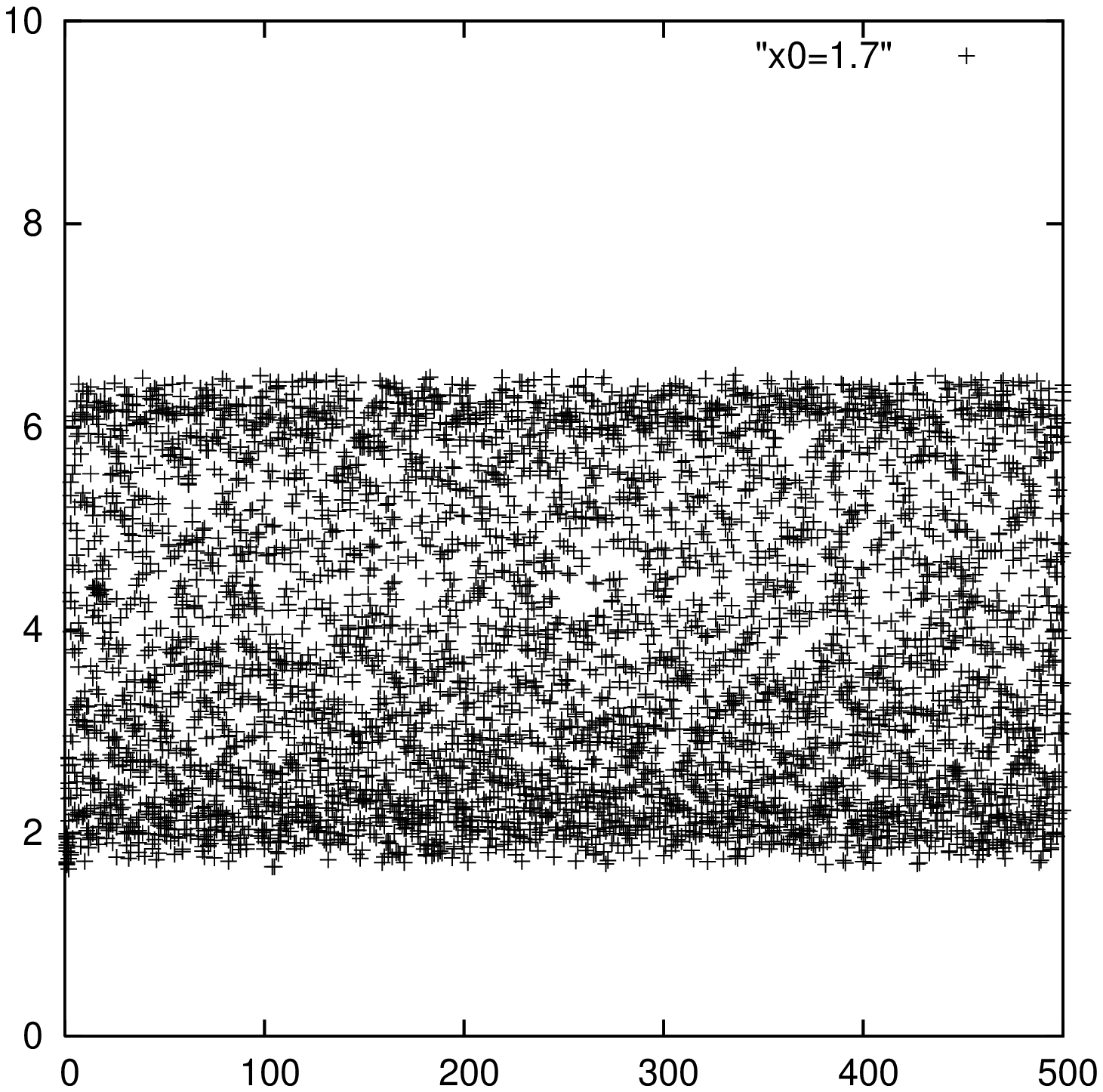}
\caption{Ten runs of (\protect{\ref{eq:main}}) with $f$ as in (\protect{\ref{def:4f}}) for each of $x_0=1.4$ (left), $x_0=1.5,1.6$ 
(middle) and $x_0=1.7$ (right) for 
$l=0.2$.}
\label{figure3}
\end{figure}

For comparison, let us present several simulations for smaller $l=0.05$, see Fig.~\ref{figure4}.

\begin{figure}[ht]
\centering  
\includegraphics[height=.15\textheight]{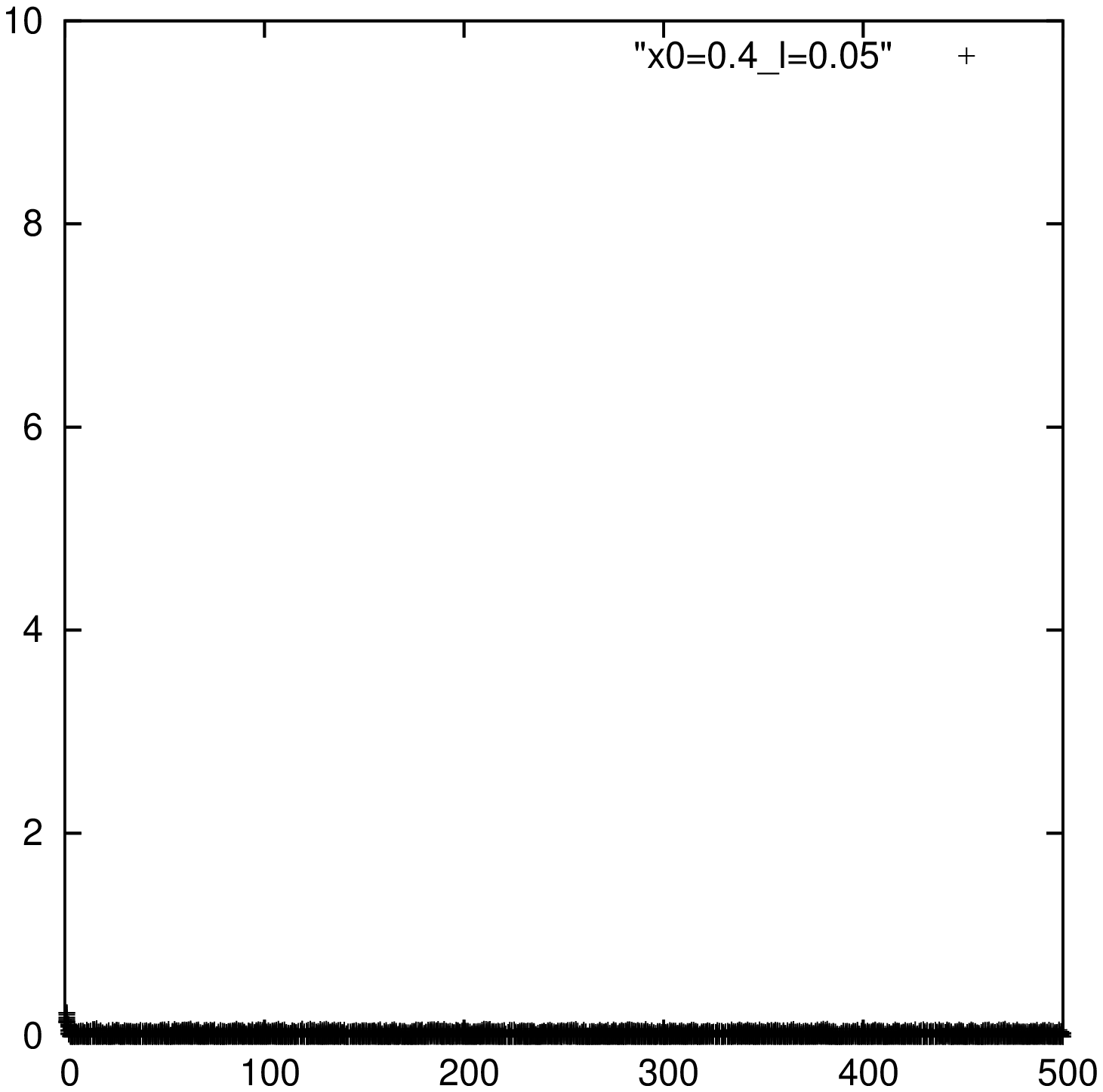}
\hspace{5mm}
\includegraphics[height=.15\textheight]{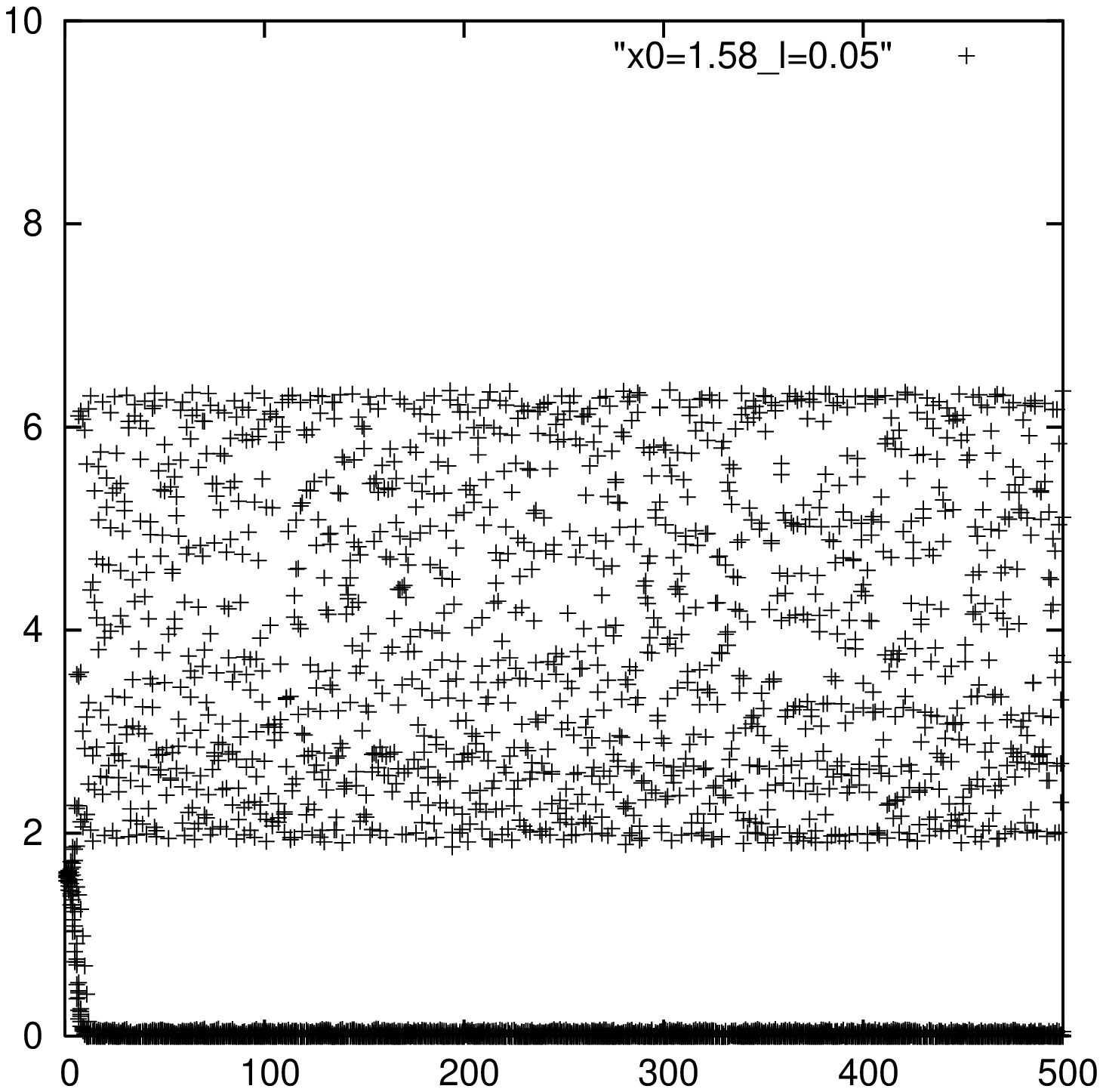}
\hspace{5mm}
\includegraphics[height=.15\textheight]{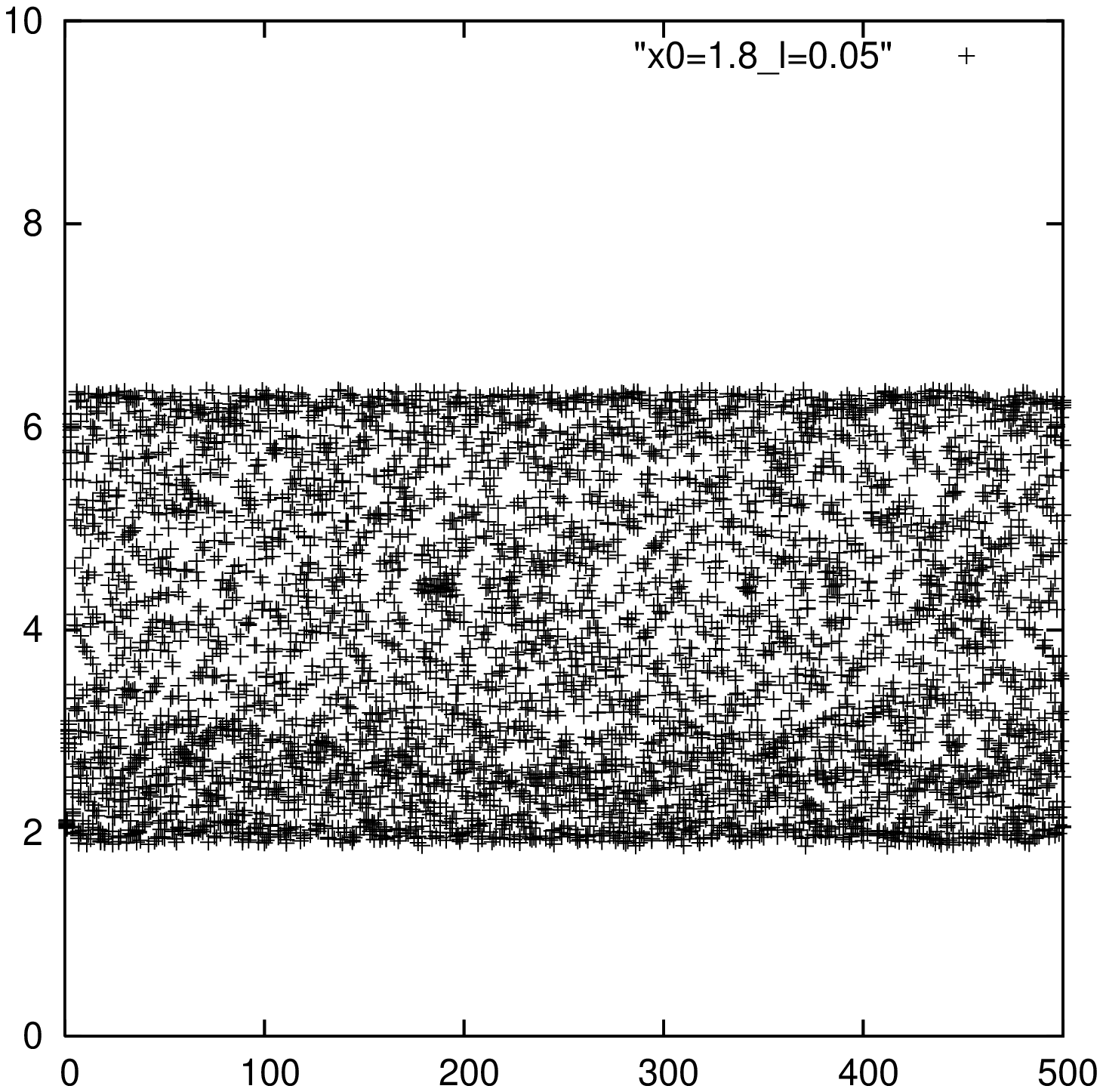}
\caption{Ten runs of (\protect{\ref{eq:main}}) with $f$ as in (\protect{\ref{def:4f}}) for each of $x_0=0.4$ (left), $x_0=1.58$ 
(middle) and $x_0=1.8$ (right) for
$l=0.05$.}
\label{figure4}
\end{figure}
\end{example}

\begin{example}
\label{ex:pers2}
Consider \eqref{eq:main} with
\begin{equation}
\label{def:42f}
f(x):=\frac{4x^2}{2+x}e^{2(1-x)}, \quad x>0.
\end{equation}
The fixed points are $c \approx 0.0833$ and $d\approx 1.2037$, the maximum 
$f_m \approx 1.3688$ is attained at $\approx 0.8508$.
The minimum of $F(x)$ on $[0,c]$ is attained at $b \approx 0.0392$ and equals $F(b) \approx -0.0186$.

%The maximum  of $F(x)$ is attained at  $\approx 0.6309$ and equals $\approx 0.6352$

Take $a=0.2$, $H=1.8>f_m$,  $f(a) \approx 0.3602$, $F(a) \approx 0.16$, $f(H)\approx 0.6886> f(a)$, 
$-F(H) \approx 1.111$; we can choose $l< 0.16$. If $l \in (-F(b),0.16)$, or $l \in (0.0186,0.16)$, we have 
persistence for any initial condition.
All numerical runs are for the case when $\chi$ is uniformly distributed on $[-1,1]$. 
We observe that for $l>- F(b)$, say, $l=0.04$, we have eventual 
persistence even for small $x_0=0.01$ (Fig.~\ref{figure5}, left) 
while observe Allee effect for smaller $l=0.01<- F(b)$ and the same initial 
value (Fig.~\ref{figure5}, right). This example illustrates the possibility to alleviate the Allee effect with 
large enough random noise. 
Fig.~\ref{figure5} (left) also illustrates the multi-step lifts to get into the persistence area.

\begin{figure}[ht]
\centering
\includegraphics[height=.2\textheight]{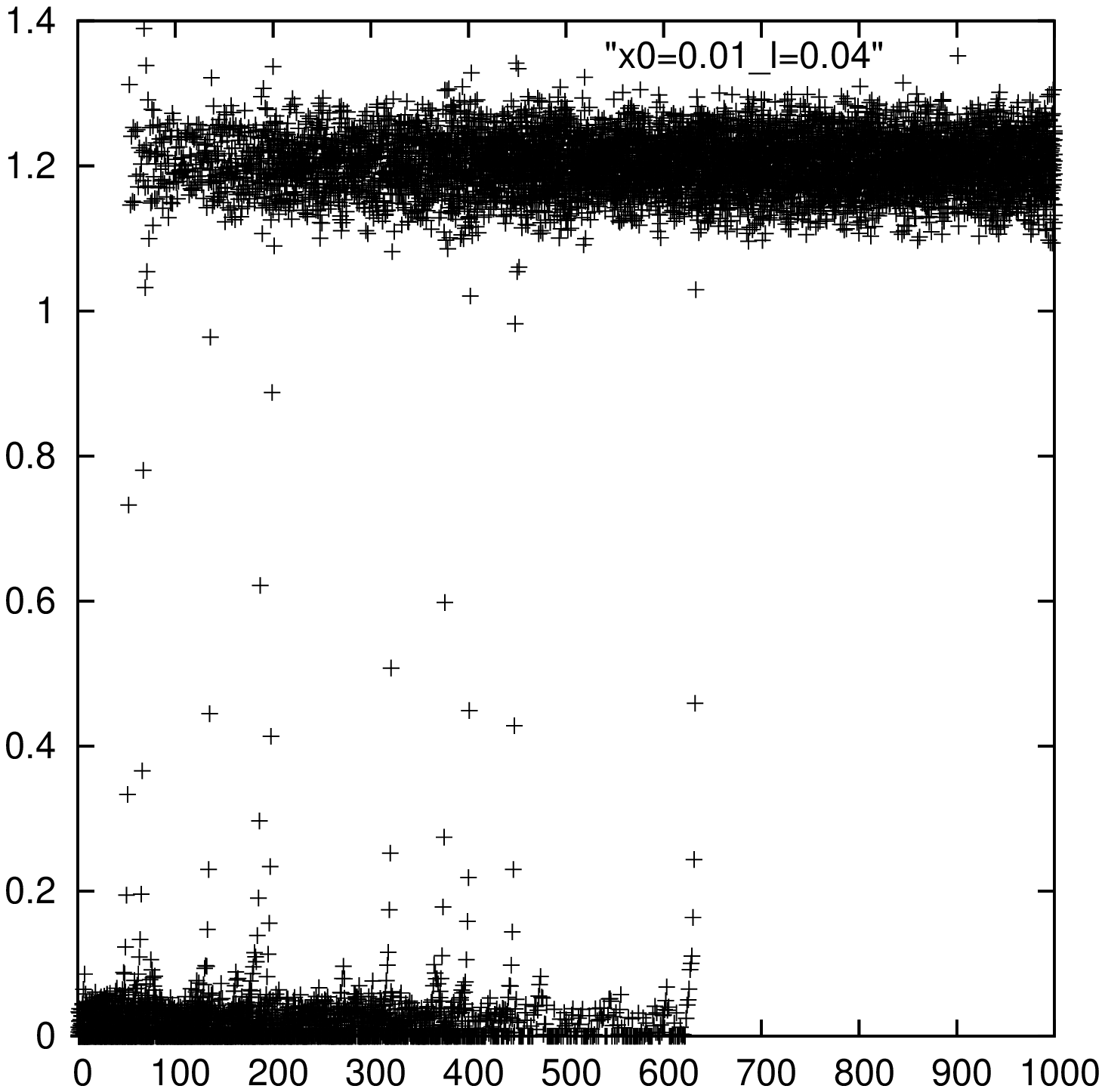}
\hspace{20mm}
\includegraphics[height=.2\textheight]{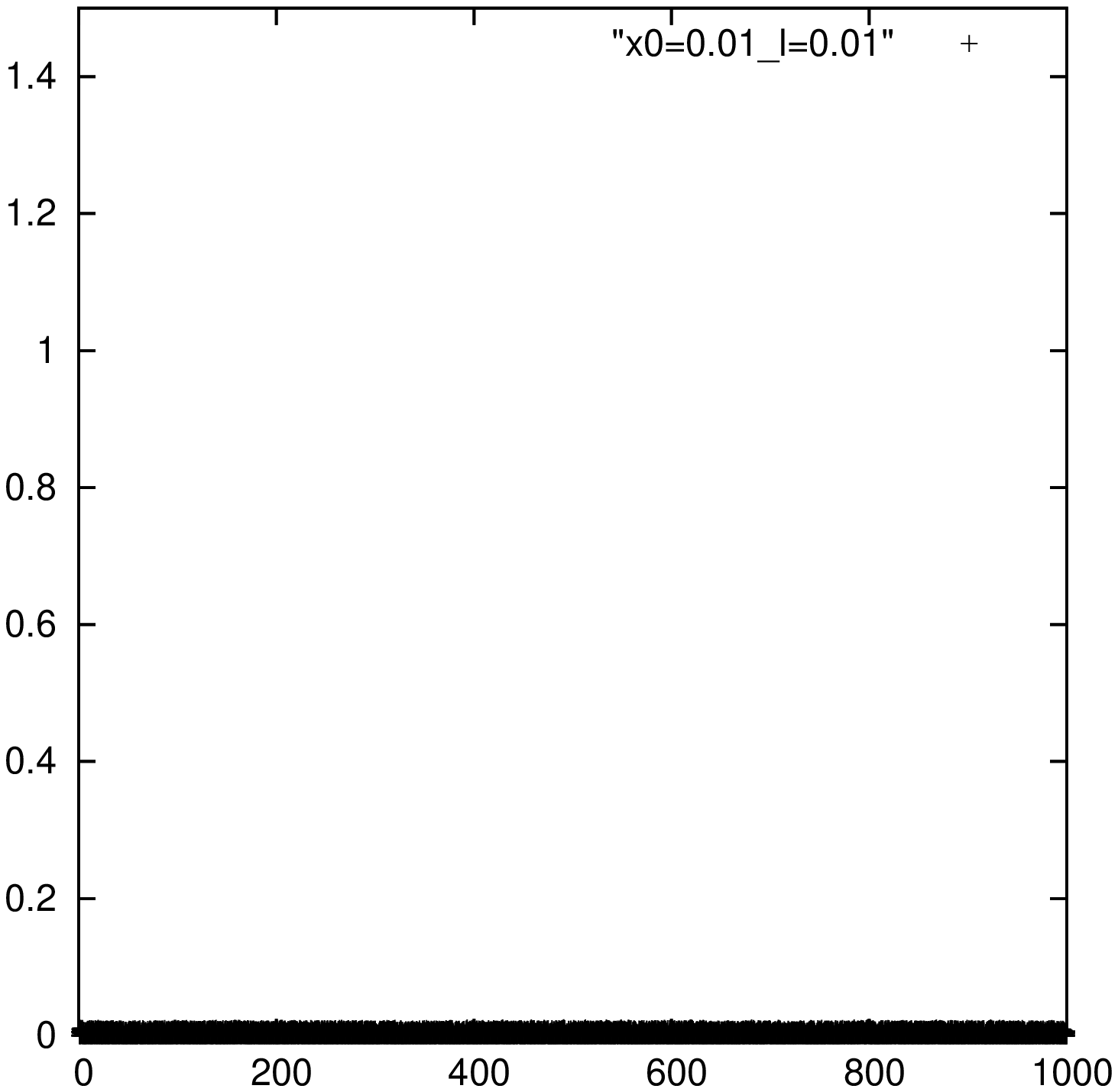}
\caption{Ten runs of (\protect{\ref{eq:main}}) with $f$ as in (\protect{\ref{def:42f}}) for $x_0=0.01$, $l=0.04$ (left), 
and $l=0.01$ (right).}
\label{figure5}
\end{figure}
\end{example}

%%%%%%%%%%%%%%

\section{Discussion}
\label{sec7}

Complicated and chaotic behavior of even simple discrete systems leads to high risk of extinction.
However, frequenly observed persistence suggested that there are some mechanisms for this type of dynamics.
In the present paper, we proposed two mechanisms for sustaining a positive expectation in populations
experiencing the Allee effect:
\begin{enumerate}
\item
By Lemma~\ref{lem:add3}, in the presence of a stochastic perturbation, there is a positive eventual
expectation for any solution, independently of initial conditions. 
This can be treated as persistence thanks to some sustained levels of occasional immigration. 
However, the lower solution bound is still zero,
and even expected solution averages are rather small and matched to this immigration probability distribution.
\item
The second mechanism is more important for sustainability of populations. It assumes that there is a substantial
range of values, where extinction due to either Allee effect or its combination with overpopulation reaction 
is impossible. For example, under contest competition \cite{Brauer} with the remaining population levels sufficient to sustain,
even for initial values in the Allee zone, large enough stochastic perturbations lead to persistence.
Specifically, the amplitude should exceed the maximal population loss in the Allee area, and at the same time
should not endanger the original sustainability area. The result can be viewed as follows:
if there is the Allee effect and sustainable dynamics for a large interval of values, introduction of a 
potentially large enough stochastic perturbation
can lead to persistence, for any initial conditions.
\end{enumerate}

For smaller perturbation amplitudes, there are 3 types of initial values: attracted to low dynamics a.s., a.s. persistent
and those which can demonstrate each type of dynamics with a positive probability.
As illustrated in Section~\ref{sec6}, all three types of dynamics are possible.

%!!!!!!! 
In this paper we consider only bounded stochastic perturbations. 
The assumption of boundedness along with the properties of the function $f$ allows to construct a "trap", the
interval $[a, H]$, into which  any solution eventually gets and stays there.

Assume for a moment that in equation \eqref{eq:main}  instead of bounded we have normally distributed~$\chi_n$. 
Applying the approach of the proof of  Theorem~\ref{lem:add2} for bounded stochastic perturbations, we can show that for any 
initial value $x_0>0$, a solution $x_n$ eventually gets into the interval $(a, H)$, a.s. 
However, if $\chi_n$ can take any negative value with nonzero probability, applying the same method, we can show that  
there is a ``sequence" of negative noises with an absolute value exceeding $H$ pushing the solution out of  
the interval $(a, H)$, a.s. Thus, a.s., for any $n_1 \in {\mathbb N}$, there is an $n \geq n_1$ such that $x_n=0$. 
So the conclusions of Lemma 3.2, (ii), and  Theorem 3.3 are no longer valid. 

Note that from the population model's point of view the assumption that the noise is bounded 
is hardly a limitation since in nature there are no unbounded noises. For a normal type of noise,
considering its truncation can be a reasonable approach to the problem.

\section{Acknowledgment}

The research was partially supported by NSERC grants 
RGPIN/261351-2010 and RGPIN-2015-05976 and also by AIM SQuaRE program.
The authors are grateful to the ananymous reviewer whose valuable comments
contributed to the present form of the paper.

\end{document}